\newcommand{\wtisign}{\operatorname{sign}}
\newcommand{\wtivar}{\operatornamewithlimits{\bf var}}
\newcommand{\wtiR}{{\mathbb{R}}}
\newcommand{\wtiF}{{\mathcal{F}}}
\newcommand{\wtiE}{ {\bf E}}
\newcommand{\wtiProb}{ {\bf P}}
\newcommand{\wtinorm}[1]{\left\|#1\right\|}
\newcommand{\wtiabs}[1]{\left|#1\right|}
\newcommand{\wtiindic}{\mathbf{1}}
\newcommand{\wtipr}[1]{\mathbf{P}\left(#1\right)}
\begin{document}

\title*{Replication of Wiener-transformable stochastic processes  with application to financial markets with  memory}

\titlerunning{Replication of Wiener-transformable processes}

\author{Elena Boguslavskaya, Yuliya Mishura, and Georgiy Shevchenko}
\institute{
Elena Boguslavskaya \at
Department of  Mathematics, Brunel University London, Uxbridge UB8 3PH, UK\\
\email{elena@boguslavsky.net}
\and
Yuliya Mishura \at
Department of Probability Theory, Statistics and Actuarial Mathematics,
Taras Shevchenko National University of Kyiv,
64, Volodymyrs'ka St.,
 01601 Kyiv, Ukraine\\
\email{myus@univ.kiev.ua}
\and
Georgiy Shevchenko\at
Department of Probability Theory, Statistics and Actuarial Mathematics,
Taras Shevchenko National University of Kyiv,
64, Volodymyrs'ka St.,
 01601 Kyiv, Ukraine\\
\email{zhora@univ.kiev.ua}}
%
%
\maketitle

\abstract*{We investigate Wiener-transformable markets, where the driving process is given by an adapted transformation of a Wiener process. This includes processes with long memory, like fractional Brownian motion and related processes, and, in general,
Gaussian processes   satisfying certain regularity conditions on their covariance functions. Our choice of  markets is motivated by  the well-known phenomena of  the so-called ``constant'' and ``variable depth''  memory observed in real world price processes,  for which fractional and multifractional models are the most adequate descriptions. Motivated by integral representation results in general Gaussian setting, we study the conditions under which random variables can be represented as pathwise integrals with respect to the driving process. From financial point of view, it means that we give  the conditions of replication of contingent claims on such markets. As an application of our results, we consider the utility maximization problem in our specific setting. Note that the markets under consideration can be both arbitrage  and arbitrage-free, and moreover, we give the representation results in terms of bounded strategies.
\keywords{Wiener-transformable process; fractional Brownian motion; long memory; pathwise integral; martingale representation; utility maximization}}

\abstract{We investigate Wiener-transformable markets, where the driving process is given by an adapted transformation of a Wiener process. This includes processes with long memory, like fractional Brownian motion and related processes, and, in general,
Gaussian processes   satisfying certain regularity conditions on their covariance functions. Our choice of  markets is motivated by  the well-known phenomena of  the so-called ``constant'' and ``variable depth''  memory observed in real world price processes,  for which fractional and multifractional models are the most adequate descriptions. Motivated by integral representation results in general Gaussian setting, we study the conditions under which random variables can be represented as pathwise integrals with respect to the driving process. From financial point of view, it means that we give  the conditions of replication of contingent claims on such markets. As an application of our results, we consider the utility maximization problem in our specific setting. Note that the markets under consideration can be both arbitrage  and arbitrage-free, and moreover, we give the representation results in terms of bounded strategies.
\keywords{Wiener-transformable process; fractional Brownian motion; long memory; pathwise integral; martingale representation; utility maximization}}


\section{Introduction}

Consider a general continuous time market model with one risky asset. For simplicity, we will work with discounted values. Let the stochastic process $\{X_t,t\in [0,T]\}$ model the discounted price of risky asset. Then the discounted final value of a self-financing portfolio\index{portfolio}  is given by a stochastic integral
\begin{equation}\label{wti:capital}
V^\psi(T) = V^\psi(0) + \int_0^T \psi(t) dX(t),
\end{equation}
where an adapted process $\psi$ is the quantity of risky asset in the portfolio. Loosely speaking, the self-financing assumption means that no capital is withdrawn or added to the portfolio; for precise definition and general overview of financial market models with continuous time we refer a reader to \cite{bjork,karat}.

Formula \eqref{wti:capital} raises several important questions of financial modeling, we will focus here on the following two.
\begin{itemize}
\item \textit{Replication}: \index{replication} identifying random variables (i.e.\ discounted contingent claims), which can be represented as final capitals of some self-financing portfolios. In other words, one looks at integral representations
\begin{equation}\label{wti:represent}
\xi = \int_0^T \psi(t) dX(t)
\end{equation}
with adapted integrand $\psi$; the initial value may be subtracted from $\xi$, so we can assume that it is zero.
\item \textit{Utility maximization}: \index{utility maximization} maximizing the expected utility of final capital over some set of admissible self-financing portfolios.
\end{itemize}

An important issue is the meaning of stochastic integral in \eqref{wti:capital} or \eqref{wti:represent}. When the process $X$ is a semimartingale, it can be understood as It\^o integral. In this case \eqref{wti:capital} is a kind of It\^o representation, see e.g.\ \cite{KS} for an extensive coverage of this topic. When the It\^o integral is understood in some extended sense, then the integral representation may exist under very mild assumptions and may be non-unique. For example, if $X=W$, a Wiener process, and $\psi$ satisfies $\int_0^T \psi_s^2 ds<\infty$ a.s., then, as it was shown by \cite{dudley}, any random variable can be represented as a final value of some self-financing portfolio for any value of initial capital.

However, empirical studies suggest that financial markets often exhibit long-range dependence (in contrast to stochastic volatility that can be both smooth and rough, i.e., can demonstrate both long-and  short-range dependence). The standard model for the phenomenon of   long-range dependence is the fractional Brownian motion with Hurst index $H>1/2$. It is not a semimartingale, so the usual It\^o integration theory is not available. The standard approach now is to define the stochastic integral in such models as a pathwise integral, namely, one usually considers the fractional integral, see \cite{bender-sottinen-valkeila,zahle}.

The models based on the fractional Brownian motion usually admit arbitrage possibilities, i.e.\ there  self-financing portfolios $\psi$ such that $V_\psi(0)\le 0$, $V_\psi(T)\ge 0$ almost surely, and $V_\psi(T)>0$ with positive probability. In the fractional Black--Scholes model, where $X_t=X_0\exp\{at+bB_t^H\}$, and $B^H$ is a fractional Brownian motion with $H>1/2$, the existence of arbitrage was shown in \cite{rogers}. Specifically, the strategy constructed there was of a ``doubling'' type, blowing the portfolio in the case of negative values; thus the potential intermediate losses could be arbitrarily large. It is worth to mention that such arbitrage exists even in the classical Black--Scholes model: the aforementioned result by Dudley allows gaining any positive final value of capital from initial zero by using a similar ``doubling'' strategy. For this reason, one usually restricts the class of admissible strategies by imposing a lower bound on the running value:\index{non-doubling strategy}
\begin{equation}\label{wti:eq:nds}
V^\psi(t)\ge -a,\quad t\in(0,T),
\end{equation}
which in particular disallows the ``doubling'' strategies. However, in the fractional Black--Scholes model, the arbitrage exists even in the class of  strategies satisfying \eqref{wti:eq:nds}, as was shown in \cite{Cheridito1}.

There are several ways to exclude arbitrage in the fractional Brownian model\index{fractional Brownian motion}. One possibility is to restrict the class of admissible strategies. For example, in \cite{Cheridito1} the absence of arbitrage is proved under further restriction that interval between subsequent trades is bounded from below (i.e.\ high frequency trading is prohibited). Another possibility is to add to the fractional Brownian motion an independent Wiener process, thus getting the so-called mixed fractional Brownian motion\index{mixed fractional Brownian motion} $M^H = B^H + W$.
The absence in such mixed models was addressed in \cite{andrmish,Cheridito}.  In \cite{andrmish}, it was shown that there is no arbitrage in the class of self-financing strategies $\gamma_t = f(t,M^H,t)$ of Markov type, depending only on the current value of the stock. In \cite{Cheridito}, it was shown that for $H\in(3/4,1)$ the distribution of mixed fractional Brownian motion on a finite interval is equivalent to that of Wiener process. As a result, in such models there is no arbitrage strategies satisfying the non-doubling assumption \eqref{wti:eq:nds}. A more detailed exposition concerning arbitrage in models based on fractional Brownian motion is given in \cite{bender-sottinen-valkeila1}.

The replication question, i.e.\ the question when a random variable can be represented as a pathwise (fractional) integral in the models with long memory was studied in many articles, even in the case where arbitrage opportunities are present. The first results were established in \cite{msv}, where it was shown that a random variable $\xi$ has representation \eqref{wti:represent} with respect to fractional Brownian motion if it is a final value of some H\"older continuous adapted process. The assumption of H\"older continuity might seem too restrictive at the first glance. However, the article \cite{msv} gives numerous examples of random variables satisfying this assumption.

The results of \cite{msv} were extended in \cite{shev-viita}, where similar results were shown for a wide class of Gaussian integrators. The article \cite{mish-shev} extended them even further and studied when a combination of H\"older continuity of integrator and small ball estimates lead to existence of representation \eqref{wti:represent}.

For the mixed fractional Brownian motion, the question of replication was considered in \cite{shev-viita}. The authors defined the integral with respect to fractional Brownian motion in pathwise sense and that with respect to Wiener process in the extended It\^o sense and shown, similarly to the result of \cite{dudley}, that any random variable has representation \eqref{wti:represent}.

It is worth to mention that the representations constructed in \cite{msv,mish-shev,shev-viita} involve integrands of ``doubling'' type, so in particular they do not satisfy the admissibility assumption \eqref{wti:eq:nds}.

Our starting point for this article was to see what contingent claims   are  representable as final values of some H\"older continuous adapted processes. It turned out that the situation is quite transparent whenever the Gaussian integrator generates the same flow of sigma-fields as the Wiener process. As a result, we came up with the concept of  Wiener-transformable financial market, which turned out to be a fruitful idea, as a lot of models of financial markets are Wiener-transformable. We consider many examples of such models in our paper. Moreover, the novelty of the present results is that we prove representation theorems that, in financial interpretation, are equivalent to the possibility of hedging of contingent claims, in the class of \textit{bounded} strategies. While even with such strategies the non-doubling assumption \eqref{wti:eq:nds} may fail, the boundedness  seems a feasible admissibility assumption.

More specifically, in the present  paper  we study  a replication and the utility maximization problems  for a broad class of asset prices processes, which are obtained by certain adapted transformation of a Wiener process; we call such processes \textit{Wiener-transformable} and provide several examples.   We concentrate mainly on non-semimartingale markets because the semimartingale markets have been studied thoroughly in the literature. Moreover, the novelty of the present results is that we prove representation theorems that, in financial interpretation, are equivalent to the possibility of hedging of contingent claims, in the class of bounded strategies. We would like to draw the attention of the reader once again to the fact that the possibility of representation means that we have arbitrage possibility in the considered class of strategies and they may be limited, although in a narrower and more familiar class of   strategies the market can be arbitrage-free. Therefore, our results demonstrate rather subtle differences in the properties of markets in different classes of strategies.

The article is organized as follows. In Section~\ref{wti:sec:2}, we recall basics of pathwise integrations in the fractional sense. In  Section~\ref{wti:sec:3}, we prove a new representation result, establishing an existence of integral representation with bounded integrand, which is of particular importance in financial applications. We also define the main object of study, Wiener-transformable markets, and provide several examples. Section~\ref{wti:sec:4} is devoted to application of representation results to the utility maximization problems.

\section{Elements of fractional calculus}\label{wti:sec:2}
As announced in the introduction, the integral with respect to Wiener-transformable processes will be defined in pathwise sense, as fractional integral.
Here we present the basic facts on fractional integration; for more details see \cite{samko,zahle}. Consider functions  $f,g:[0,T]\rightarrow \mathbb{R}$, and let  $[a,b]\subset [0,T]$.
For $\alpha\in (0,1)$ define Riemann-Liouville fractional derivatives on finite interval $[a,b]$\index{Riemann-Liouville fractional derivative}
\begin{gather*}
\big(\mathcal{D}_{a+}^{\alpha}f\big)(x)=\frac{1}{\Gamma(1-\alpha)}\bigg(\frac{f(x)}{(x-a)^\alpha}+\alpha
\int_{a}^x\frac{f(x)-f(u)}{(x-u)^{1+\alpha}}du\bigg)1_{(a,b)}(x),\end{gather*}
\begin{gather}\label{wti:equ:dif}\big(\mathcal{D}_{b-}^{ \alpha}g\big)(x)=\frac{1} {\Gamma(1-\alpha)}\bigg(\frac{g(x)}{(b-x)^{ \alpha}}+ \alpha
\int_{x}^b\frac{g(x)-g(u)}{(u-x)^{1+\alpha}}du\bigg)1_{(a,b)}(x).
\end{gather}
Assuming that
 $\mathcal{D}_{a+}^{\alpha}f\in L_1[a,b]$, $\mathcal{D}_{b-}^{1-\alpha}g_{b-}\in
L_\infty[a,b]$, where $g_{b-}(x) = g(x) - g(b)$,
the generalized Lebesgue--Stieltjes
integral\index{generalized Lebesgue--Stieltjes
integra}
is defined as
\begin{equation*}\int_a^bf(x)dg(x)= \int_a^b\big(\mathcal{D}_{a+}^{\alpha}f\big)(x)
\big(\mathcal{D}_{b-}^{1-\alpha}g_{b-}\big)(x)dx.
\end{equation*}

Let  function $g$ be $\theta$-H\"{o}lder continuous, $g\in C^\theta[a,b]$ with $\theta\in(\frac12,1)$, i.e.\ 
$$
\sup_{t,s\in[0,T],t\neq s}\frac{\wtiabs{g(t)-g(s)}}{\wtiabs{t-s}^\theta}<\infty.
$$
 In order to integrate w.r.t. function  $g$ and to find an upper bound of the integral, fix some $\alpha \in(1-\theta,1/2)$ and introduce the following norm:
\begin{gather*}
\|f\|_{\alpha,[a,b]} = \int_a^b \left(\frac{|{f(s)}|}{(s-a)^\alpha} + \int_a^s \frac{|{f(s)-f(z)}|}{(s-z)^{1+\alpha}}dz\right)ds.
\end{gather*}
For simplicity we abbreviate $\|\cdot\|_{\alpha,t} = \|\cdot\|_{\alpha,[0,t]}$. Denote $$\Lambda_\alpha(g):= \sup_{0\le s<t\le T} |{\mathcal{D}_{t-}^{1-\alpha}g_{t-}}(s)|.$$ In view of H\"{o}lder continuity, $\Lambda_\alpha(g)<\infty$.

Then for any $t\in(0,T]$ and for any $f$ with $\|f\|_{\alpha,t}<\infty$, the integral $\int_0^t f(s) dg(s)$ is well defined as a generalized Lebesgue--Stieltjes integral, and the following bound is evident:
\begin{gather}\label{wti:equ:ineq}
\Big|{\int_0^t f(s)dg(s)}\Big|\le \Lambda_\alpha(g) \|f\|_{\alpha,t}.
\end{gather}
It is well known that in the case if  $f$ is  $\beta$-H\"{o}lder continuous,  $f\in C^\beta[a,b]$, with $\beta+\theta>1$, the generalized Lebesgue--Stieltjes integral $\int_a^bf(x)dg(x)$ exists, equals to the limit of Riemann sums and admits bound \eqref{wti:equ:ineq} for any $\alpha \in(1-\theta, \beta\wedge 1/2)$.

\section{Representation results for Gaussian and Wiener-transformable processes}\label{wti:sec:3}

Let throughout the paper $(\Omega, \mathcal{F},  \wtiProb)$ be   a complete probability space   supporting all stochastic processes mentioned below. Let also $\mathbb{F} = \{\mathcal F_t,t\in[0,T]\}$ be a filtration satisfying standard assumptions. In what follows, the adaptedness of a process $X = \{X(t),t\in[0,T]\}$ will be understood with respect to $\mathbb{F}$, i.e.\ $X$ will be called adapted if for any $t\in[0,T]$, $X(t)$ is $\mathcal{F}_t$-measurable.

We start with representation results, which supplement those of \cite{mish-shev}.

Consider a continuous centered Gaussian process $G$ with incremental variance of $G$ satisfying the following two-sided power bounds for some $H\in (1/2,1)$.
\begin{itemize}
	\item[$(A)$] There exist $C_1, C_2>0$ such that for any $s,t\in [0,T]$
	\begin{equation}\label{wti:eq:helix}
	C_1\left|t-s\right|^{2H}\le\wtiE\left|G(t)-G(s)\right|^2\le C_2 \left|t-s\right|^{2H}.
	\end{equation}
\end{itemize}
Assume additionally  that the increments of $G$ are positively correlated. More exactly, let the following condition hold
\begin{itemize}
\item[$(B)$] For any $0 \le s_1 \le t_1 \le s_2 \le t_2\le T$ 	 $$\wtiE\left(G({t_1})-G({s_1})\right)\left(G({t_2})-G({s_2})\right)\ge0.$$
\end{itemize}
A process satisfying \eqref{wti:eq:helix} is often referred to as a \textit{quasi-helix}.\index{quasi-helix}

Note that the right inequality in \eqref{wti:eq:helix} implies that
\begin{equation}\label{wti:eq:Gmodcont}
\sup_{t,s\in[0,T]}\frac{|G(t)-G(s)|}{|t-s|^{H}|\log(t-s)|^{1/2}} <\infty
\end{equation}
almost surely (see e.g.\ p.~220 in \cite{lifshits}).

We will need the following small deviation estimate for sum of squares of Gaussian random variables, see e.g.\ \cite{lishao}.

\begin{lemma}
\label{wti:lem:small}
Let $\{\xi_i\}_{i=1,\ldots,n}$ be jointly Gaussian centered random variables. For all $x$ such that $0<x<\sum_{i=1}^n \wtiE \xi^2_i$, it holds
\begin{gather*}
\wtipr{\sum_{i=1}^n\xi_i^2\le x}\leq\exp\left\{-\frac{\left(\sum_{i=1}^n \wtiE {\xi_i^2}-x\right)^2}{\sum_{i,j=1}^n (\wtiE {\xi_i\xi_j})^2}\right\}.
\end{gather*}
\end{lemma}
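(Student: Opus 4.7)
The plan is to establish this via a classical Cram\'er--Chernoff bound on the Laplace transform of the quadratic form.

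Let $\Sigma = (\wtiE\xi_i\xi_j)_{i,j=1}^n$ denote the covariance matrix of the Gaussian vector $(\xi_1,\ldots,\xi_n)$, and let $\lambda_1,\ldots,\lambda_n\ge 0$ be its eigenvalues. After an orthogonal change of variables, the quadratic form $\sum_i\xi_i^2$ has the same distribution as $\sum_i\lambda_i Z_i^2$ for independent standard Gaussian $Z_i$. This reduces the problem to a weighted chi-square, whose Laplace transform is explicit: for any $t>0$,
$$\wtiE\Big[\exp\Big(-t\sum_i\xi_i^2\Big)\Big] = \prod_{i=1}^n (1+2t\lambda_i)^{-1/2}.$$

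First I would apply the exponential Markov inequality to write
$$\wtipr{\sum_i\xi_i^2\le x} \le e^{tx}\prod_i(1+2t\lambda_i)^{-1/2},$$
and then take logarithms. Next, I would use the elementary inequality $\log(1+y)\ge y-y^2/2$ for $y\ge 0$ (which follows by checking that the derivative of $\log(1+y)-y+y^2/2$ equals $y^2/(1+y)\ge 0$). Substituting this in gives
$$\log\wtipr{\sum_i\xi_i^2\le x} \le tx - t\sum_i\lambda_i + t^2\sum_i\lambda_i^2 = -tA + t^2 B,$$
where I write $A = \sum_i\wtiE\xi_i^2 - x>0$ and $B=\sum_i\lambda_i^2$. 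To identify $B$ with the denominator in the statement, I would invoke the standard identities $\sum_i\lambda_i = \operatorname{tr}(\Sigma) = \sum_i\wtiE\xi_i^2$ and $\sum_i\lambda_i^2 = \operatorname{tr}(\Sigma^2) = \sum_{i,j}\Sigma_{ij}^2 = \sum_{i,j}(\wtiE\xi_i\xi_j)^2$, the latter using the symmetry of $\Sigma$ and expanding $(\Sigma^2)_{ii}=\sum_j\Sigma_{ij}^2$.

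Finally I would optimize the quadratic $-tA+t^2 B$ in $t>0$ and read off the bound. The main obstacle, and the only step that is not entirely routine, is calibrating the constant in the exponent to match the statement exactly: naive optimization at $t^\ast=A/(2B)$ delivers $\exp(-A^2/(4B))$, so to recover the cleaner form in the lemma one has to sharpen the Taylor bound on $\log(1+y)$ or, equivalently, extract additional slack from the fact that the cross terms $(2t\lambda_i)^2$ are dominated uniformly by $(2t)^2 B$. This is exactly the type of refinement carried out in the Li--Shao weighted-chi-square estimates cited in the excerpt; once that calibration is accomplished, the remaining argument is a textbook exponential moment computation.
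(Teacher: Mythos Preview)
The paper does not prove this lemma; it merely cites the Li--Shao reference \cite{lishao}. So there is no ``paper's own proof'' to compare against, and your Cram\'er--Chernoff sketch is a reasonable attempt at a self-contained argument.

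Your argument is correct and complete up to the point you yourself flag: diagonalization, the Laplace transform of a weighted chi-square, the bound $\log(1+y)\ge y-y^2/2$, and the trace identities $\sum_i\lambda_i=\sum_i\wtiE\xi_i^2$ and $\sum_i\lambda_i^2=\sum_{i,j}(\wtiE\xi_i\xi_j)^2$ are all fine. The gap is exactly the factor~$4$ you mention: optimizing $-tA+t^2B$ yields $\exp(-A^2/(4B))$, not $\exp(-A^2/B)$. Your proposed remedy --- ``sharpen the Taylor bound on $\log(1+y)$'' or ``extract additional slack from the cross terms'' --- is not an argument; the inequality $\log(1+y)\ge y-y^2/2$ is already sharp at $y=0$, so no uniform improvement of this type exists, and you have not said what concrete refinement would recover the missing factor. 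As written, the proposal proves the lemma with $\tfrac14$ in the exponent and then defers to the very reference the paper cites, which is circular.

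That said, for the only place the lemma is used (the Borel--Cantelli step in Theorem~\ref{wti:thm:representation}), any positive constant in the exponent suffices: one only needs $\wtipr{\cdot}\le\exp\{-Cn^{2-2H}\}$ to be summable. So your weaker bound with $A^2/(4B)$ is entirely adequate for the paper's purposes, and you could simply state and prove that version instead.
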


\begin{theorem}\label{wti:thm:representation}
Let a centered Gaussian process $G$ satisfy $(A)$ and $(B)$ and $\xi$ be a random variable such that there exists an adapted $r$-H\"older continuous process $Z$ with $Z(T) = \xi$. There exists a bounded adapted process $\psi$, such that $\left\|\psi\right\|_{\alpha,T}<\infty$ for some $\alpha\in \left(1-H,1\right)$ and $\xi$ admits the representation
\begin{equation}\label{wti:reprez}
\xi=\int_{0}^{T}\psi(s) dG(s),
\end{equation}  almost surely.
\end{theorem}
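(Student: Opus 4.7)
The plan is to build $\psi$ locally on a shrinking geometric sequence of intervals $[T_n,T_{n+1}]$ with $T_n\uparrow T$, arranging that the integral of $\psi\,dG$ over each such interval equals a prescribed H\"older-controlled increment of $Z$, so that the telescoping sum reproduces $\xi$. After the trivial reduction of replacing $Z(t)$ by $Z(t)-Z(0)(T-t)/T$ (still adapted and $r$-H\"older) we may assume $Z(0)=0$, so that $\xi = Z(T) = \sum_{n\ge 0} \eta_n$ with $\eta_n := Z(T_{n+1})-Z(T_n)$. Fixing $T_n = T(1-q^n)$ for some $q\in(0,1)$ to be chosen, $r$-H\"older continuity gives $|\eta_n|\le L\Delta_n^r$ for $\Delta_n = T_{n+1}-T_n$ and an almost surely finite random constant $L=L(\omega)$.

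The local construction on $[T_n,T_{n+1}]$ proceeds by partitioning the interval into $k_n$ equal cells with endpoints $t_{n,0}<\cdots<t_{n,k_n}$ and defining $\psi$ as a piecewise constant adapted process on this refinement. Writing $\xi_{n,i} = G(t_{n,i})-G(t_{n,i-1})$ for the Gaussian cell increments, I would choose $\psi$ of the form $\psi = c_n\sum_{i=1}^{\tau_n} a_{n,i-1}\mathbf{1}_{[t_{n,i-1},t_{n,i})}$ with $\mathcal{F}_{t_{n,i-1}}$-measurable coefficients $a_{n,i-1}$ and a stopping index $\tau_n$, in the spirit of \cite{msv,shev-viita}, tuned so that $\int_{T_n}^{T_{n+1}}\psi\,dG = \eta_n$ exactly on a good event $\Omega_n$. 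The role of Lemma~\ref{wti:lem:small} is to certify that on $\Omega_n$ a certain quadratic form in the $\xi_{n,i}$ stays comparable to its expectation, so that $c_n$ can be taken of order $\eta_n/\sum_{i}\wtiE \xi_{n,i}^2$.

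Applying Lemma~\ref{wti:lem:small} to $\{\xi_{n,i}\}_{i\le k_n}$, condition $(A)$ gives $\sum_i\wtiE\xi_{n,i}^2 \ge C_1 \Delta_n^{2H}k_n^{1-2H}$, while $(A)$ together with the positive-correlation condition $(B)$ is used to bound the denominator $\sum_{i,j}(\wtiE\xi_{n,i}\xi_{n,j})^2$: positivity allows one to estimate $\sum_j\wtiE\xi_{n,i}\xi_{n,j}$ by a single telescoping piece $\wtiE \xi_{n,i}(G(T_{n+1})-G(T_n))\lesssim \Delta_n^H(\Delta_n/k_n)^H$ via Cauchy--Schwarz, giving a denominator of matching order. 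Taking $x$ as half the mean yields a failure probability that is exponentially small in a positive power of $k_n$; Borel--Cantelli then ensures $\Omega_n$ holds for all but finitely many $n$ almost surely, and the remaining exceptional intervals, lying in a bounded subset of $[0,T)$, are handled by a single pathwise H\"older-integrable correction on that set.

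The main obstacle is to balance the parameters $(q,k_n,\text{threshold})$ so that three requirements hold simultaneously: summable failure probabilities, uniform boundedness of $\psi$ in $n$ (not merely almost sure finiteness), and finiteness of $\|\psi\|_{\alpha,T}$ for some $\alpha\in(1-H,1/2)$, so that the pathwise integral is meaningful via the bound \eqref{wti:equ:ineq}. The modulus estimate \eqref{wti:eq:Gmodcont} gives $\max_i|\xi_{n,i}|\lesssim (\Delta_n/k_n)^H\sqrt{\log(k_n/\Delta_n)}$ almost surely, and combined with $|\eta_n|\le L\Delta_n^r$ the local sup-bound of $\psi$ is of order $L\Delta_n^{r-H}k_n^{H-1}\sqrt{\log(k_n/\Delta_n)}$, which can be kept bounded by a suitable polynomial choice of $k_n$ in $\Delta_n^{-1}$; the $\|\cdot\|_{\alpha,T}$-norm is then estimated by summing geometric series generated by the H\"older modulus of $\psi$ across cells. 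The assumption $(B)$ is essential throughout, since without non-negative correlations the denominator in Lemma~\ref{wti:lem:small} need not admit a good enough upper bound, and the Borel--Cantelli step breaks down.
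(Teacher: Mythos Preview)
Your overall architecture matches the paper's proof closely: a geometric sequence $t_n\uparrow T$, targets $Z(t_n)$, a local construction on $[t_n,t_{n+1}]$ via a sub-partition, the small-deviation Lemma~\ref{wti:lem:small} together with Borel--Cantelli to certify eventual success, boundedness of $\psi$ from the modulus estimate \eqref{wti:eq:Gmodcont}, and a tail estimate on $\|\psi\|_{\alpha,[t_N,T]}$. The use of $(B)$ to bound $\sum_{i,j}(\wtiE\xi_{n,i}\xi_{n,j})^2$ via telescoping is also exactly what the paper does.

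There is, however, a genuine gap in your local building block. A piecewise constant adapted integrand $\psi=c_n\sum_i a_{n,i-1}\wtiindic_{[t_{n,i-1},t_{n,i})}$ gives $\int\psi\,dG=c_n\sum_i a_{n,i-1}\xi_{n,i}$, which is \emph{linear} in the cell increments $\xi_{n,i}$; no choice of $\mathcal F_{t_{n,i-1}}$-measurable coefficients can produce $\sum_i\xi_{n,i}^2$, since $\xi_{n,i}$ is not observable at time $t_{n,i-1}$. Hence the quantity to which Lemma~\ref{wti:lem:small} applies never materialises, and the sentence ``so that $c_n$ can be taken of order $\eta_n/\sum_i\wtiE\xi_{n,i}^2$'' has no basis. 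The paper's device is different: on each sub-cell it takes the \emph{non-constant} integrand
\[
\phi_n(t)=a_n\sum_k\big(G(t)-G(s_{n,k})\big)\wtiindic_{[s_{n,k},s_{n,k+1})}(t),
\]
and the change-of-variables formula for fractional integrals yields $\int\phi_n\,dG=a_n\sum_k\big(G(s_{n,k+1})-G(s_{n,k})\big)^2$, exactly the sum of squares to which Lemma~\ref{wti:lem:small} is applied. This $\phi_n$ is piecewise H\"older of order up to $H$ (not piecewise constant), and its sup-norm on $[t_n,t_{n+1}]$ is controlled by \eqref{wti:eq:Gmodcont}, which is what gives uniform boundedness of $\psi$ after choosing $a_n$ appropriately.

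A second, smaller point: ``a single pathwise H\"older-integrable correction'' for the finitely many bad early intervals is too vague, because the number of such intervals is random and the correction must be adapted. The paper handles this with a two-case recursion: whenever the previous target was missed (Case~I) one uses a blow-up integrand, available by a lemma from \cite{mish-shev}, stopped upon hitting the required value; this is always followed by the main construction (Case~II), and Case~I is shown to occur only finitely often.
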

\begin{remark}
A similar result was proved in  \cite{mish-shev}, Theorem 4.1, which assumed \eqref{wti:eq:helix} with different exponents in the right-hand side and in the left-hand side of the inequality. Having equal exponents allowed us to establish existence of a \textit{bounded} integrand $\psi$, thus extending previous results.
\end{remark}
\begin{proof}
To construct an integrand, we modify ideas of \cite{mish-shev} and \cite{shalaiko}. Throughout the proof, $C$ will denote a generic constant,  while $C(\omega)$, a random constant; their values may change between lines.

Choose some $\alpha \in \big(1-H ,(r+1-H)\wedge \frac12\big)$.

We start with the construction of $\psi$. First take some  $\theta\in (0,1)$,  put $t_n = T-\theta^{n}$, $n\ge 1$, and let $\Delta_n = t_{n+1}-t_n$.  It is easy to see that
\begin{gather}
T-t_n\le C\Delta_n.
\label{wti:t_n-ineq}
\end{gather}
Denote for brevity $\xi_n = Z(t_n)$. Then by Assumption 1, $\wtiabs{\xi_n -\xi_{n+1}}\le C(\omega) \theta^{rn}$. Therefore, there exists some $N_0 = N_0(\omega)$ such that
\begin{equation}\label{wti:eq:deltaxi}
\wtiabs{\xi_n -\xi_{n+1}}\le n \theta^{rn}
\end{equation}
for all $n\ge N_0(\omega)$.

We construct the integrand $\psi$ inductively between the points $\{t_n,n\ge 1\}$. First let $\psi(t)=0$, $t\in[0,t_1]$. Assuming that we have already constructed $\psi(t)$ on $[0,t_n)$, define $V(t)=\int_0^t \psi(s)dG(s), t\in[0,t_n]$.

Consider some cases.

\underline{Case I.} $V(t_n)\neq \xi_{n-1}$. By Lemma 4.1 in \cite{mish-shev}, there exists an adapted process $\{\phi_n(t),t\in[t_n,t_{n+1}]\}$, bounded on $[t_n,t]$ for any $t\in(t_n,t_{n+1})$ and such that $\int_{t_n}^{t}\phi_n(s) d G(s)\to +\infty$ as $t\to t_{n+1}-$. Define a stopping time
\begin{gather*}
\tau_n=\inf\left\{t\geq t_n: \int_{t_n}^{t}\phi_n(s) dG(s)\geq |\xi_n-V_{t_n}| \right\},
\end{gather*}
and set
\begin{gather*}
\psi(t)=\phi_n(t) \wtisign\big(\xi_n-V(t_n)\big)\wtiindic_{[t_n,\tau_n]}(t), \,t\in[t_n,t_{n+1}).
\end{gather*}
It is obvious that $\int_{t_n}^{t_{n+1}}\psi(s)dG(s)=\xi_n-V(t_n)$ and $V(t_{n+1})=\xi_n$.

\underline{Case II.} $V(t_n)=\xi_{n-1}$. We consider a uniform partition $s_{n,k} = t_n + k\delta_n$, $k=1,\ldots,n$ of $[t_n,t_{n+1}]$ with a mesh $\delta_n=\Delta_n/n$ and an auxiliary function
\begin{gather*}
\phi_n(t)=a_n\sum_{k=0}^{n-1}
\big(G(t)-G(s_{n,k})\big)\wtiindic_{[s_{n,k},s_{n,k+1})}(t),
\end{gather*}
where $a_n = n^{-2}\theta^{(\alpha-H-1)n}$. Since $\phi_n$ is piecewise H\"older continuous of order up to $H$, by the change of variables formula (Theorem 4.3.1 in \cite{zahle})
\begin{equation*}
\int_{t_n}^{t_{n+1}} \phi_n(t) dG(t) = a_n \sum_{k=0}^{n-1} \big(G(s_{n,k+1})-G(s_{n,k})\big)^2.
\end{equation*}

Define a stopping time
\begin{gather*}
\sigma_n=\inf\Big\{t\geq t_n: \int_{t_n}^t \phi_n(s)dG(s)\geq |\xi_n-\xi_{n-1}|\Big\}\wedge t_{n+1},
\end{gather*}
and set
\begin{gather*}
\psi(t)=\wtisign(\xi_n-\xi_{n-1})\phi_n(t)\wtiindic_{[t_n,\sigma_n]}(t),\, t\in[t_n,t_{n+1}).
\end{gather*}
Now we want to ensure that, almost surely, $V(t_{n}) = \xi_{n-1}$ for all $n$ large enough. By construction, Case I is always succeeded by Case II. So we need to ensure that $\sigma_n <t_{n+1}$ for all $n$ large enough, equivalently, that
$$
a_n \sum_{k=0}^{n-1} \big(G(s_{n,k+1})-G(s_{n,k})\big)^2> \wtiabs{\xi_n - \xi_{n-1}}.
$$
Thanks to \eqref{wti:eq:deltaxi}, it is enough to ensure that
$$
 \sum_{k=0}^{n-1} \big(G(s_{n,k+1})-G(s_{n,k})\big)^2 > a_n^{-1}n\theta^{rn} = n^2 \theta^{(r+H +1-\alpha)n}
$$
for all $n$ large enough. Define $\xi_k = G(s_{n,k+1})-G(s_{n,k})$, $k=0,\dots,n-1$. Thanks to our choice of $\alpha$, $r+H+1-\alpha > 2H$, so $n^2 \theta^{(r+H +1-\alpha)n} < C_1 n^{1-2H}\theta^{2Hn}$ for all $n$ large enough. Therefore, in view of \eqref{wti:eq:helix},
\begin{gather*}
\sum_{k=0}^{n-1} \wtiE \xi_k^2  \ge C_1 n \delta_n^{2H} = C_1 n^{1-2H}\theta^{2H n} > n^2 \theta^{(r+H +1-\alpha)n},
\end{gather*}
so we can use Lemma \ref{wti:lem:small}.
Using $(A)$ and $(B)$, estimate
\begin{gather*}
\sum_{i,j=0}^{n-1} \big(\wtiE \xi_i\xi_j \big)^2\le \max_{0\le i,j\le n-1} \wtiE \xi_i\xi_j \sum_{i,j=0}^{n-1} \wtiE \xi_i\xi_j\\
 \le C_1 \delta_n^{2H} \wtiE \Big( \sum_{i=0}^{n-1} \xi_i\Big)^2  = C_1 \delta_n^{2H} \wtiE \big(G(t_{n+1}) - G(t_n) \big)^2\\
 \le C_1^2  \delta_n^{2H}\Delta_n^{2H}\le C_1^2 n^{-2H} \Delta^{4H}  = C_1^2 n^{-2H} \theta^{4H n}.
\end{gather*}
Hence, by Lemma~\ref{wti:lem:small},
\begin{gather*}
\wtipr{ \sum_{k=0}^{n-1} \big(G(s_{n,k+1})-G(s_{n,k})\big)^2 \le n^2 \theta^{(r+H +1-\alpha)n}}\\
\le \exp\left\{ - \frac{\big(C_1 n^{1-2H}\theta^{2H n} - n^2 \theta^{(r+H +1-\alpha)n}\big)^2}{C_1^2 n^{-2H} \theta^{4H n}}\right\}
\le \exp \left\{ - C n^{2 -2H}\right\}.
\end{gather*}
Therefore, by the Borel--Cantelli lemma, almost surely there exists some $N_1(\omega)\ge N_0(\omega)$ such that for all $n\ge N_1(\omega)$
$$
\sum_{k=0}^{n-1} \big(G(s_{n,k+1})-G(s_{n,k})\big)^2 > n^2 \theta^{(r+H +1-\alpha)n},
$$
so, as it was explained above, we have $V(t_n) = \xi_{n-1}$, $n\ge N_1(\omega)$.

Since all functions $\phi_n$ are bounded, we have that $\psi$ is bounded on
$[0,t_N]$ for any $N\ge 1$. Further, thanks to \eqref{wti:eq:Gmodcont}, for $t\in[t_n,t_{n+1}]$ with $n\ge N_1(\omega)$,
\begin{equation}\label{wti:eq:psibound}
\begin{gathered}
\wtiabs{\psi(s)}\le C(\omega) a_n \delta_n^H\wtiabs{\log \delta_n}^{1/2} \le  C(\omega) n^{-2}\theta^{(\alpha-H-1)n} n^{-H} \theta^{Hn} n^{1/2}\\ = C(\omega) n^{\alpha - H - 3/2}\theta^{(\alpha-1)n}.
\end{gathered}
\end{equation}
Therefore, $\psi$ is bounded (moreover, $\psi(t)\to 0$, $t\to T-$).

Further, by construction, $\wtinorm{\psi}_{\alpha,t_N}<\infty$ for any $N\ge 1$. Moreover, $\wtiabs{V(t)-\xi_{N-1}}\le \wtiabs{\xi_N - \xi_{N-1}}$, $t\in[t_{N},t_{N+1}]$. Thus, it remains to to verify that $\wtinorm{\psi}_{\alpha,[t_N,1]}<\infty$ and $\int_{t_N}^1 \psi(s) dG(s)\to 0$, $N\to\infty$, which would follow from $\wtinorm{\psi}_{\alpha,[t_N,1]}\to 0$, $N\to\infty$.

Let $N\ge N_1(\omega)$. Write
\begin{equation*} 
\wtinorm{\psi}_{\alpha,[t_N,T]} = \sum_{n=N}^\infty \int_{t_n}^{t_{n+1}}\left(\frac{|\psi(s)|}{(s-t_N)^{\alpha}}+\int_{t_N}^s \frac{|\psi(s)-\psi(u)|}{|s-u|^{1+\alpha}}du\right) ds.
\end{equation*}
Thanks to \eqref{wti:eq:psibound},
\begin{gather*}
\int_{t_n}^{t_{n+1}}\frac{|\psi(s)|}{(s-t_N)^{\alpha}} ds
\le
C(\omega)\Delta_n^{1-\alpha}n^{\alpha - H - 3/2}\theta^{(\alpha-1)n}
= C(\omega)n^{\alpha - H - 3/2}.
\end{gather*}
Further,
\begin{gather*}
\int_{t_N}^{t_{n+1}}\int_{t_n}^s\frac{|\psi(s)-\psi(u)|}{|s-u|^{1+\alpha}}du\, ds\\ =
\sum_{k=1}^n \int_{s_{n,k-1}}^{s_{n,k}}\left(\int_{t_N}^{t_n}+\int_{t_n}^{s_{n,k-1}}+\int_{s_{n,k-1}}^{s} \right)\frac{|\psi(s)-\psi(u)|}{|s-u|^{1+\alpha}}du\, ds=:I_1+I_2+I_3.
\end{gather*}
Start with $I_1$, observing that  $\psi$ vanishes on $(\sigma_n,t_{n+1}]$:
\begin{gather*}
I_1\leq \int_{t_n}^{t_{n+1}}\sum_{j=N}^n \int_{t_{j-1}}^{t_j}\frac{|\psi(s)|+|\psi(u)|}{|s-u|^{1+\alpha}}du\, ds\\
\leq  C(\omega) n^{\alpha - H - 3/2}\theta^{(\alpha-1)n}
\int_{t_n}^{t_{n+1}}(s-t_n)^{-\alpha}ds\\
+ C(\omega)\sum_{j=N}^{n-1} j^{\alpha-H-3/2}\theta^{(\alpha-1)j} \int_{t_n}^{t_{n+1}}(s-t_{j+1})^{-\alpha}ds\\
\leq  C(\omega)n^{\alpha - H - 3/2}\theta^{(\alpha-1)n}\Delta_n^{1-\alpha} + C(\omega) \sum_{j=N}^{n-1}j^{\alpha-H-3/2}\theta^{(\alpha-1)j} \Delta_n^{1-\alpha}\\
= C(\omega)n^{\alpha - H - 3/2} + C(\omega) \sum_{j=N}^{n-1}j^{\alpha-H-3/2}\theta^{(\alpha-1)(j-n)}.
\end{gather*}
Similarly,
\begin{gather*}
I_2\leq C(\omega)n^{\alpha - H - 3/2}\theta^{(\alpha-1)n}  \sum_{k=1}^n \int_{s_{n,k-1}}^{s_{n,k}}\int_{t_n}^{s_{n,k-1}}|s-u|^{-1-\alpha}du\, ds \\
\le C(\omega)n^{\alpha - H - 3/2}\theta^{(\alpha-1)n} \sum_{k=1}^n \int_{s_{n,k-1}}^{s_{n,k}} (s-s_{n,k-1})^{-\alpha}ds\\
\le C(\omega)n^{\alpha - H - 3/2}\theta^{(\alpha-1)n}  n \delta_n^{1-\alpha}=C(\omega)n^{2\alpha - H - 3/2}.
\end{gather*}
Finally, assuming that $\sigma_n\in [s_{n,l-1},s_{n,l})$,
\begin{gather*}
I_3\leq C(\omega)\sum_{k=1}^{l-1} \int_{s_{n,k-1}}^{s_{n,k}}\int_{s_{n,k-1}}^s  a_n\frac{(s-u)^{H}|\log (s-u)|^{1/2}}{(s-u)^{1+\alpha}}du\, ds\\
+ \int_{s_{n,l-1}}^{\sigma_n}\int_{s_{n,l-1}}^s\frac{|\psi(s)-\psi(u)|}{|s-u|^{1+\alpha}}du\, ds+\int_{\sigma_n}^{s_{n,l}}\int_{s_{n,l-1}}^{\sigma_n}\frac{|\psi(s)-\psi(u)|}{|s-u|^{1+\alpha}}du\, ds\\
\le C(\omega)a_n\sum_{k=1}^n \int_{s_{n,k-1}}^{s_{n,k}}(s-s_{n,k-1})^{H-\alpha}|\log(s-s_{n,k-1})|^{1/2}ds\\
+ C(\omega) n^{\alpha - H - 3/2}\theta^{(\alpha-1)n}\int_{\sigma_n}^{s_{n,l}}\int_{s_{n,l-1}}^{\sigma_n}\frac{1}{|s-u|^{1+\alpha}}du\, ds\\ \leq
C(\omega)a_n n\delta_n^{H+1-\alpha}|\log \delta_n|^{1/2}  + C(\omega) n^{\alpha - H - 3/2}\theta^{(\alpha-1)n}\delta_n^{-\alpha}\\
= C(\omega)n^{\alpha-H-3/2} + C(\omega)n^{2\alpha - H - 3/2}\le C(\omega)n^{2\alpha - H - 3/2}.
\end{gather*}

Gathering all estimates we get
\begin{gather*}
\int_{t_N}^1 |D^\alpha_{t_N+}(\psi)(s)|ds
\leq C(\omega)\sum_{n=N}^\infty \Big(n^{2\alpha - H - 3/2}  + \sum_{j=N}^{n-1}j^{\alpha-H-3/2}\theta^{(\alpha-1)(j-n)}\Big)\\
\le C(\omega)\Big( N^{2\alpha - H-1/2} + \sum_{j=N}^
\infty j^{\alpha-H-3/2}\sum_{n=j+1}^\infty \theta^{(1-\alpha)(n-j)}  \Big)\\
\le C(\omega) N^{2\alpha - H-1/2},
\end{gather*}
which implies that $\wtinorm{\psi}_{\alpha,[t_N,T]}\to 0$, $N\to\infty$, finishing the proof.
\end{proof}

Now we turn to the main object of this article.\index{Wiener-transformable process}
 \begin{definition}\label{wti:def1} A Gaussian process  $G=\{G(t), t\in\wtiR^+\}$ is called $m$-Wiener-trans\-for\-mable if there exists $m$-dimensional Wiener process $W=\{W(t), t\in\wtiR^+\}$ such that $G$ and $W$ generate the same filtration, i.e. for any $t\in\wtiR^+$ $$\mathcal{F}_t^G=\mathcal{F}_t^W.$$
 We say that $G$ is $m$-Wiener-transformable to $W$ (evidently, process $W$ can be non-unique.)
 \end{definition}
 \begin{remark}\begin{itemize}
   \item[$(i)$] In the case when $m=1$  we say that the  process $G$ is  Wiener-trans\-formable.
   \item[$(ii)$] Being Gaussian so having moments of any order, $m$-Wiener-transformable process admits at each time $t\in\wtiR^+$
   the martingale    representation $G(t)=\wtiE(G(0))+\sum_{i=1}^m\int_0^tK_i(t,s)dW_i(s),$
   where $K_i(t,s)$ is $\mathcal{F}_s^W$-adapted for any $0\leq s\leq t$ and $\int_0^t\wtiE(K_i(t,s))^2ds<\infty$ for any $t\in\wtiR^+$.
   \end{itemize}
 \end{remark}

Now let the random variable $\xi$ be $\wtiF_T^W$-measurable, $\wtiE\xi^2<\infty$. Then in view of martingale representation theorem, $\xi$ can be represented as
\begin{equation}\label{wti:mart-repr}
\xi = \wtiE \xi + \int_0^T \vartheta(t) dW(t),
\end{equation}
where $\vartheta$ is an adapted process with $\int_0^T \wtiE\vartheta(t)^2 dt<\infty$.

As it was explained in introduction, we are interested when $\xi$ can be represented in the form
$$
\xi = \int_0^T \psi(s) dG(s),
$$
where the integrand is adapted, and the integral is understood in the pathwise sense.

\begin{theorem}\label{wti:thm1}
Let the following conditions hold.
\begin{itemize}
\item[$(i)$] Gaussian process $G$ satisfies condition $(A)$ and $(B)$.
\item[$(ii)$] Stochastic process $\vartheta$ in representation \eqref{wti:mart-repr} satisfies
\begin{equation}\label{wti:thetaassump}
\int_{0}^{T}|\vartheta(s)|^{2p}ds<\infty
\end{equation}
a.s.\ with some $p> 1$.
\end{itemize}
Then there exists a bounded adapted process $\psi$ such that $\left\|\psi\right\|_{\alpha,T}<\infty$ for some $\alpha\in \left(1-H,\frac{1}{2}\right)$ and $\xi$ admits the representation
 \begin{equation*}
\xi=\int_{0}^{T}\psi(s) dG(s),
\end{equation*}  almost surely.
\end{theorem}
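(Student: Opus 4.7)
The plan is to reduce the claim to Theorem~\ref{wti:thm:representation} by producing an adapted, $r$-H\"older continuous process $Z$ with $Z(T)=\xi$ for some $r>0$; the natural candidate is the Brownian martingale
$$
Z(t) = \wtiE\xi + \int_0^t \vartheta(s)\,dW(s).
$$
By construction $Z$ is adapted to the filtration generated by $W$, which by Definition~\ref{wti:def1} coincides with the one generated by $G$, and $Z(T)=\xi$ almost surely. The remaining task is thus to upgrade the continuity of $Z$ into pathwise H\"older regularity, after which Theorem~\ref{wti:thm:representation} supplies the bounded integrand directly.

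To circumvent the fact that \eqref{wti:thetaassump} is only an almost sure rather than an integrability condition, I would first localize. Define
$$
\sigma_n = T\wedge\inf\Bigl\{t\in[0,T]:\int_0^t \wtiabs{\vartheta(s)}^{2p}\,ds\ge n\Bigr\},
$$
so that $\sigma_n\uparrow T$ almost surely. Applying the Burkholder--Davis--Gundy inequality to the stopped martingale $Z^{\sigma_n}(t)=Z(t\wedge\sigma_n)$, followed by H\"older's inequality with exponents $p$ and $p/(p-1)$, for arbitrary $q\ge 2$ and $0\le s<t\le T$ one obtains
$$
\wtiE\wtiabs{Z^{\sigma_n}(t)-Z^{\sigma_n}(s)}^q
\le C_q\,\wtiE\left(\int_{s\wedge\sigma_n}^{t\wedge\sigma_n}\vartheta(u)^2\,du\right)^{q/2}
\le C_{q}\,n^{q/(2p)}(t-s)^{q(1-1/p)/2}.
$$
Taking $q$ large enough that $q(1-1/p)/2>1$, which is possible since $p>1$, the Kolmogorov continuity criterion yields $r$-H\"older continuity a.s. of the (already continuous) process $Z^{\sigma_n}$ on $[0,T]$ for any $r\in\bigl(0,(1-1/p)/2-1/q\bigr)$.

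The next step is to stitch these local moduli into a global one. On the event $\{\sigma_n=T\}$ the paths of $Z$ and $Z^{\sigma_n}$ coincide, so $Z$ is $r$-H\"older continuous on a subset of $\{\sigma_n=T\}$ of full conditional probability; since $\bigcup_n\{\sigma_n=T\}$ has probability one, $Z$ is a.s. $r$-H\"older continuous on $[0,T]$. Finally, since $r$ may be chosen strictly positive, one may pick $\alpha\in\bigl(1-H,(r+1-H)\wedge\tfrac12\bigr)$ and apply Theorem~\ref{wti:thm:representation} to the adapted $r$-H\"older process $Z$, producing a bounded adapted integrand $\psi$ with $\wtinorm{\psi}_{\alpha,T}<\infty$ and $\xi = Z(T) = \int_0^T \psi(s)\,dG(s)$, which is exactly the required representation.

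The principal obstacle is passing from the weak, almost sure integrability condition \eqref{wti:thetaassump} to a genuine pathwise H\"older regularity of $Z$; the localization by $\sigma_n$ combined with the Kolmogorov criterion is designed precisely to accomplish this, and the assumption $p>1$ is essential because it is what makes the H\"older exponent $(1-1/p)/2$ strictly positive. Once $Z$ is secured, the heavy lifting has already been done in Theorem~\ref{wti:thm:representation}, so no additional delicate stochastic analysis with respect to $G$ is needed.
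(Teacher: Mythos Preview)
Your proposal is correct and follows essentially the same route as the paper: define $Z(t)=\wtiE\xi+\int_0^t\vartheta(s)\,dW(s)$, show it is a.s.\ H\"older continuous of some positive order via localization, Burkholder--Davis--Gundy and H\"older's inequality, then Kolmogorov--Chentsov, and finally invoke Theorem~\ref{wti:thm:representation}. The paper isolates the H\"older regularity step as a separate lemma (Lemma~\ref{wti:lem1}) and uses the indicator truncation $\vartheta_n(t)=\vartheta(t)\wtiindic_{\int_0^t|\vartheta|^{2p}ds\le n}$ rather than your stopping time $\sigma_n$, but these localizations are equivalent and the remainder of the argument is identical.
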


\begin{remark} As it was mentioned in \cite{mish-shev}, it is sufficient to require the properties $(A)$ and $(B)$ to hold on some subinterval $[T-\delta,T]$. Similarly, it is enough to require in $(ii)$ that $\int_{T-\delta}^T |\vartheta(t)|^{2p}dt<\infty$ almost surely.
\end{remark}

First we prove a simple result establishing H\"older continuity of It\^o integral.

\begin{lemma}\label{wti:lem1}
Let $\vartheta=\{\vartheta(t), t\in [0,T]\}$ be a real-valued progressively measurable process such that for some $p\in(1,+\infty]$ $$\int_{0}^{T}|\vartheta(s)|^{2p}ds<\infty$$ a.s. Then the stochastic integral $\int_{0}^{t}\vartheta(s)dW(s)$ is H\"{o}lder continuous of any order up to $\frac{1}{2}-\frac{1}{2p}$.
\end{lemma}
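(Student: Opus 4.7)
The plan is to combine the Burkholder--Davis--Gundy (BDG) inequality with H\"older's inequality and Kolmogorov's continuity criterion, together with a localization argument to handle the fact that $\vartheta$ is only assumed to have its pathwise integral finite almost surely, not in expectation.

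First I would localize. Define the stopping times
\begin{equation*}
\tau_n = \inf\Bigl\{t\in[0,T] : \int_0^t |\vartheta(s)|^{2p}\,ds \ge n\Bigr\}\wedge T,
\end{equation*}
and set $\vartheta^{(n)}(s) = \vartheta(s)\wtiindic_{[0,\tau_n]}(s)$. By the hypothesis $\int_0^T |\vartheta(s)|^{2p}\,ds<\infty$ a.s., we have $\tau_n\to T$ a.s., and moreover $\int_0^T|\vartheta^{(n)}(s)|^{2p}\,ds\le n$ pathwise, hence in expectation. Denote $I^{(n)}(t)=\int_0^t\vartheta^{(n)}(s)\,dW(s)$ and $I(t)=\int_0^t\vartheta(s)\,dW(s)$; on the event $\{\tau_n=T\}$ these two processes coincide on $[0,T]$.

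Next, for an auxiliary parameter $q>1$ to be chosen, I would combine BDG with H\"older's inequality (exponents $p$ and $p/(p-1)$) to bound increments of $I^{(n)}$. For $0\le s<t\le T$,
\begin{equation*}
\wtiE\wtiabs{I^{(n)}(t)-I^{(n)}(s)}^{2q} \le C_q\,\wtiE\Bigl(\int_s^t |\vartheta^{(n)}(u)|^2\,du\Bigr)^{q}
\le C_q (t-s)^{q(1-1/p)}\wtiE\Bigl(\int_s^t|\vartheta^{(n)}(u)|^{2p}\,du\Bigr)^{q/p},
\end{equation*}
and the last expectation is bounded by $n^{q/p}$. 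Thus
\begin{equation*}
\wtiE\wtiabs{I^{(n)}(t)-I^{(n)}(s)}^{2q}\le C(n,q,p)\,(t-s)^{q(1-1/p)}.
\end{equation*}
Choosing $q>p/(p-1)$ makes the exponent $q(1-1/p)$ strictly greater than one, so Kolmogorov's continuity criterion applies and yields a version of $I^{(n)}$ that is H\"older continuous of any order strictly less than
\begin{equation*}
\frac{q(1-1/p)-1}{2q} = \frac12-\frac{1}{2p}-\frac{1}{2q}.
\end{equation*}

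Finally, I would let $q\to\infty$ to recover exponents up to $\frac12-\frac{1}{2p}$. Given any target $\gamma<\frac12-\frac{1}{2p}$, pick $q$ large enough that $\gamma<\frac12-\frac{1}{2p}-\frac{1}{2q}$; then $I^{(n)}$ is $\gamma$-H\"older continuous on $[0,T]$ for every $n$. Since $I=I^{(n)}$ on $\{\tau_n=T\}$ and $\wtiProb(\cup_n\{\tau_n=T\})=1$, the process $I$ is $\gamma$-H\"older continuous almost surely. The main obstacle is essentially the choice of moments: one has to see that an extra BDG exponent $q$ independent of $p$ is needed, because applying BDG only with exponent $p$ would give the weaker threshold $\frac12-\frac1p$ rather than the claimed $\frac12-\frac{1}{2p}$; the localization itself is routine.
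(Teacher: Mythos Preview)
Your proof is correct and takes essentially the same approach as the paper: localization via the running $L^{2p}$-integral, H\"older's inequality to extract the factor $(t-s)^{1-1/p}$, BDG with an auxiliary large moment exponent, Kolmogorov--Chentsov, and letting that exponent tend to infinity. The only cosmetic difference is that the paper first isolates the intermediate claim ``a deterministic bound $\int_s^t\vartheta^2(u)\,du\le C(t-s)^a$ implies H\"older continuity of any order up to $a/2$'' and then applies it with $a=1-1/p$, whereas you interleave the steps directly.
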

\begin{proof}
First note that if there exist non-random positive constants $a,C$ such that for any $s,t\in [0,T]$ with $s<t$
$$
\int_{s}^{t}\vartheta^2(u)du \le C(t-s)^a,
$$
then $\int_0^t \vartheta(s)dW(s)$ is H\"older continuous of any order up to $a/2$. Indeed, in this case by the Burkholder inequality, for any $r>1$ and $s,t\in [0,T]$ with $s<t$
$$
\wtiE \left| \int_s^t \vartheta(u) dW(u)\right|^r\le C_r \wtiE \left( \int_s^t \vartheta^2(u) du\right)^{r/2} \le C (t-s)^{ar/2},
$$
so by the Kolmogorov--Chentsov theorem, $\int_0^t \vartheta(s)dW(s)$ is H\"older continuous of order $\frac{1}{r}(\frac{ar}2-1) = \frac{a}2 - \frac{1}{2r}$. Since $r$ can be arbitrarily large, we deduce the claim.

Now let for $n\ge 1$, $\vartheta_n(t) = \vartheta(t)\mathbf{1}_{\int_0^t |\vartheta(s)|^{2p} ds\le n}$, $t\in[0,T]$. By the H\"older inequality, for any $s,t\in [0,T]$ with $s<t$
$$
\int_s^t \vartheta_n^2(u)du \le (t-s)^{1-1/p}\left(\int_s^t |\vartheta(u)|^{2p} du\right)^{1/p} \le n^{1/p}(t-s)^{1-1/p}.
$$
Therefore, by the above claim, $\int_0^t \vartheta_n(s)dW(s)$ is a.s.\ H\"older continuous of any order up to $\frac12 - \frac{1}{2p}$. However, $\vartheta_n$ coincides with $\vartheta$ on $\Omega_n = \{\int_0^T |\vartheta(t)|^{2p} dt\le n\}$. Consequently, $\int_0^t \vartheta_n(s)dW(s)$ is a.s.\ H\"older continuous of any order up to $\frac12 - \frac{1}{2p}$ on $\Omega_n$. Since $\wtiProb(\bigcup_{n\ge 1} \Omega_n) = 1$, we arrive at the statement of the lemma.
\end{proof}

\noindent \textbf{Proof of Theorem~\ref{wti:thm1}.}
Define $$
Z(t) = \wtiE\xi + \int_0^t \vartheta(s) dW(s).
$$
This is an adapted process with $Z(T) = \xi$, moreover, it follows from Lemma~\ref{wti:lem1} that $Z$ is H\"older continuous of any order up to $\frac{1}{2}- \frac{1}{2p}$. Thus, the statement follows from Theorem~\ref{wti:thm:representation}.

In the case where one looks at improper representation, no assumptions on $\xi$ are needed. 
\begin{theorem}\label{wti:thm2} (Improper representation theorem)
Assume that an adapted Gaussian process $G=\{G(t)$, $t\in[0,T]\}$ satisfies conditions  $(A),(B)$. Then for any random variable $\xi$  there exists an adapted process $\psi$ that  $\left\|\psi\right\|_{\alpha,t}<\infty$ for some $\alpha\in \left(1-H,\frac{1}{2}\right)$ and any $t\in[0,T)$ and $\xi$ admits the representation
 \begin{equation*}
\xi=\lim_{t\to T-}\int_{0}^{t}\psi(s) dG(s),
\end{equation*}  almost surely.
\end{theorem}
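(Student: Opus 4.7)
My plan is to recycle the ``Case I'' sub-construction from the proof of Theorem \ref{wti:thm:representation}, iterated along a grid accumulating at $T$. The essential observation is that in the improper setting we never need to hit $\xi$ exactly: we only need $\int_{0}^{t}\psi\,dG\to \xi$ as $t\to T-$. Consequently the delicate Case II analysis (which was needed in Theorem \ref{wti:thm:representation} solely to guarantee that the bounded-strategy bookkeeping stays alive until the very end) disappears, and no integrability hypothesis on $\xi$ is required, because the targets will be chosen inside the filtration rather than coming from a martingale representation.

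First I would fix $\theta\in(0,1)$, set $t_{n}=T-\theta^{n}$, and choose $\mathcal{F}_{t_{n}}$-measurable random variables $\eta_{n}$ such that $\eta_{n}\to\xi$ almost surely. Assuming $\xi$ is $\mathcal{F}_T$-measurable and that $\mathcal{F}_{T}=\sigma(\bigcup_{n}\mathcal{F}_{t_{n}})$ (which holds under the standard assumptions on $\mathbb{F}$), a monotone class argument produces $\eta_{n}\to\xi$ in probability, and after thinning the grid we may upgrade this to a.s.\ convergence. Set $\eta_{0}=0$ and $\psi\equiv 0$ on $[0,t_{1}]$, so $V(t_{1})=\eta_{0}$.

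Next, assuming inductively that $V(t_{n})=\eta_{n-1}$, I apply Lemma 4.1 of \cite{mish-shev} on $[t_{n},t_{n+1}]$ to obtain an adapted process $\phi_{n}$, bounded on $[t_{n},t]$ for every $t<t_{n+1}$, with $\int_{t_{n}}^{t}\phi_{n}(s)\,dG(s)\to+\infty$ as $t\to t_{n+1}-$, and then define
$$
\tau_{n}=\inf\Big\{t\ge t_{n}:\int_{t_{n}}^{t}\phi_{n}(s)\,dG(s)\ge|\eta_{n}-V(t_{n})|\Big\},
\qquad
\psi(t)=\wtisign\!\big(\eta_{n}-V(t_{n})\big)\,\phi_{n}(t)\wtiindic_{[t_{n},\tau_{n}]}(t),
$$
for $t\in[t_{n},t_{n+1})$. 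Divergence of the $\phi_{n}$-integral forces $\tau_{n}<t_{n+1}$ a.s., so $V(t_{n+1})=\eta_{n}$ by continuity of the generalized Lebesgue--Stieltjes integral in its upper limit. For any $t<T$, fixing $N$ with $t\le t_{N}$, the process $\psi$ on $[0,t_{N}]$ is a finite concatenation of pieces, each bounded on its defining interval $[t_{k},\tau_{k}]$ and null on $(\tau_{k},t_{k+1})$; hence $\psi$ is bounded on $[0,t_{N}]$ and in particular $\wtinorm{\psi}_{\alpha,t}<\infty$ for every $\alpha\in(1-H,1/2)$.

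The convergence $V(t)\to\xi$ as $t\to T-$ then reduces to controlling $V$ between grid points: one wants $|V(t)-\eta_{n-1}|\le|\eta_{n}-\eta_{n-1}|$ on $[t_{n},t_{n+1}]$, which combined with the Cauchy property $\eta_{n}-\eta_{n-1}\to 0$ and $\eta_{n}\to\xi$ closes the argument. This is the main obstacle, since it requires the incremental integral $\int_{t_{n}}^{t}\phi_{n}\,dG$ to stay inside $[0,|\eta_{n}-V(t_{n})|]$ on $[t_{n},\tau_{n}]$ rather than overshoot. I would handle this by choosing $\phi_{n}$ in the spirit of Case II of Theorem \ref{wti:thm:representation}, namely $\phi_{n}(t)=a_{n}\sum_{k}(G(t)-G(s_{n,k}))\wtiindic_{[s_{n,k},s_{n,k+1})}(t)$ on a fine uniform partition with $a_{n}$ large enough (and using the same small-ball argument via Lemma \ref{wti:lem:small} together with $(A)$ and $(B)$) to guarantee a.s.\ divergence of the integral to $+\infty$; the change-of-variables formula then gives $\int_{t_{n}}^{t}\phi_{n}\,dG=a_{n}\sum_{k}(G(s_{n,k+1})-G(s_{n,k}))^{2}$, which is \emph{monotone non-decreasing} in $t$ and therefore never overshoots past $\tau_{n}$. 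With that, the whole scheme goes through without any moment condition on $\xi$.
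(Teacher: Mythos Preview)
Your first five paragraphs reproduce the paper's proof almost verbatim: pick $t_n\uparrow T$, choose $\mathcal{F}_{t_n}$-measurable $\eta_n\to\xi$ a.s., and on each $[t_n,t_{n+1})$ run the integrand $\phi_n$ from Lemma~4.1 of \cite{mish-shev} until its running integral first hits $|\eta_n-\eta_{n-1}|$. That is exactly what the paper does.

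Your concern in the last paragraph --- that one needs $\int_{t_n}^{t}\phi_n\,dG\in[0,|\eta_n-\eta_{n-1}|]$ on $[t_n,\tau_n]$ in order to sandwich $V(t)$ between $\eta_{n-1}$ and $\eta_n$ --- is legitimate, but it is already addressed by Lemma~4.1 of \cite{mish-shev}. The $\phi_n$ constructed there is itself of sum-of-squares type (on nested subintervals accumulating at $t_{n+1}$), so its running integral is non-negative; the upper bound then follows from continuity and the definition of $\tau_n$. The paper uses this implicitly when it asserts that $\int_0^t\psi\,dG$ ``lies between $\xi_{n-1}$ and $\xi_n$'' on each block.

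Your proposed replacement for $\phi_n$, on the other hand, has a genuine gap. With a \emph{finite} uniform partition of $[t_n,t_{n+1}]$ the terminal integral is $a_n\sum_k\bigl(G(s_{n,k+1})-G(s_{n,k})\bigr)^2$, which is almost surely finite no matter how large $a_n$ is; there is no divergence to $+\infty$. Lemma~\ref{wti:lem:small} only furnishes a probabilistic lower bound of order $n^{1-2H}\Delta_n^{2H}$. In Theorem~\ref{wti:thm:representation} this was enough because the targets satisfied the deterministic bound $|\xi_n-\xi_{n-1}|\le n\theta^{rn}$; here you have no a priori control on $|\eta_n-\eta_{n-1}|$, so you cannot force $\tau_n<t_{n+1}$ and the induction collapses. (Incidentally, the running value on a single cell is $\tfrac{a_n}{2}(G(t)-G(s_{n,j}))^2$ plus the completed squares, which is non-negative but not monotone in $t$, contrary to your claim.) The remedy is simply to keep the $\phi_n$ of Lemma~4.1 --- which already uses an infinite nested partition to obtain both divergence and non-negativity --- and drop your final paragraph; the argument then coincides with the paper's.
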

\begin{proof}
The proof is exactly the same as for Theorem 4.2 in \cite{shev-viita}, so we just sketch the main idea. 

Consider an increasing sequence of points $\{t_n,n\ge 1\}$ in $[0,T)$ such that $t_n\to T$, $n\to\infty$, and let $\{\xi_n,n\ge 1\}$ be a sequence of random variables such that $\xi_n$ is $\mathcal{F}_{t_n}$-measurable for each $n\ge 1$, and $\xi_n\to \xi$, $n\to\infty$, a.s. Set for convenience $\xi_0 = 0$.
Similarly to Case I in Theorem~\ref{wti:thm:representation}, for each $n\ge 1$, there exists an adapted process $\{\phi_n(t),t\in[t_n,t_{n+1}]\}$, such that $\int_{t_n}^{t}\phi_n(s) d G(s)\to +\infty$ as $t\to t_{n+1}-$. For $n\ge 1$, define a stopping time
\begin{gather*}
\tau_n=\inf\left\{t\geq t_n: \int_{t_n}^{t}\phi_n(s) dG(s)\geq |\xi_n-\xi_{n-1}| \right\}
\end{gather*}
and set
\begin{gather*}
\psi(t)=\phi_n(t) \wtisign\big(\xi_n-\xi_{n-1}\big)\wtiindic_{[t_n,\tau_n]}(t), \,t\in[t_n,t_{n+1}).
\end{gather*}
Then for any $n\ge 1$, we have $\int_{0}^{t_{n+1}}\psi(s)dG(s)=\xi_n$ and $\int_{0}^{t}\psi(s)dG(s)$ lies between $\xi_{n-1}$ and $\xi_n$ for $t\in[t_{n-1},t_n]$. Consequently, $\int_{0}^{t}\psi(s) dG(s)\to \xi$, $t\to T-$, a.s., as required.
\end{proof}

Further we give several examples of Wiener-transformable Gaussian processes satisfying conditions $(A)$ and $(B)$  (for more detail and proofs see, e.g. \cite{mish-shev}) and formulate the corresponding representation results.

\subsection{Fractional Brownian motion}

Fractional Brownian motion\index{fractional Brownian motion} $B^H$ with Hurst parameter $H\in(0,1)$ is a centered Gaussian process with the covariance
$$
\wtiE B^H(t)B^H(s) = \frac{1}{2}\left(t^{2H}+s^{2H}-|t-s|^{2H}\right);
$$
an extensive treatment of fractional Brownian motion is given in \cite{Mish}.
For $H=\frac12$, fractional Brownian motion is a Wiener process; for $H\neq \frac12$ it is Wiener-transformable to the Wiener
process $W$ via relations
\begin{equation}\label{wti:fbmviawin}
B^H(t)=\int_0^t K^H(t,s) dW(s)
\end{equation}
and
\begin{equation}\label{wti:winviafbm}
 W(t)=\int_0^t k^H(t,s)dB^H(s),
\end{equation}
see e.g.\ \cite{norros}.

Fractional Brownian motion with index $H\in(0,1)$ satisfies condition $(A)$ and satisfies condition $(B)$ if $H\in(\frac{1}{2},1)$.

Therefore, a random variable satisfying \eqref{wti:thetaassump}  with any $p>1$ admits the representation \eqref{wti:reprez}.

\subsection{Fractional Ornstein--Uhlenbeck process}
Let $H\in (\frac{1}{2},1)$. Then the fractional Ornstein--Uhlenbeck process\index{fractional Ornstein--Uhlenbeck process} $Y=\{Y(t),   t\ge 0\}$, involving fractional Brownian component and  satisfying the equation
$$ Y(t)=Y_0+\int_0^t(b-aY(s))ds+\sigma  B^H(t),$$
where $a,b\in\wtiR$ and $\sigma>0$, is Wiener-transformable to the same Wiener process as the underlying fBm  $B^H$.

Consider a fractional Ornstein--Uhlenbeck process of the simplified form
\begin{equation*}
Y(t) = Y_0 + a \int_0^t Y(s) ds + B^H(t), \mbox{ } t \geq 0.
\end{equation*}
It satisfies condition $(A)$; if $a>0$, it satisfies condition $(B)$ as well.

As it was mentioned in \cite{mish-shev}, the representation theorem is valid for a fractional Ornstein-Uhlenbeck
process with a negative drift coefficient too. Indeed, we can annihilate the drift of the fractional Ornstein-Uhlenbeck process with the help of Girsanov theorem, transforming a fractional  Ornstein-Uhlenbeck process with negative drift to a fractional Brownian motion $\widetilde{B}^H$.
Then, assuming \eqref{wti:thetaassump}, we represent the random variable $\xi$ as  $\xi=\int_0^T\psi(s)d\widetilde{B}^H(s)$ on the new probability space. Finally, we return to the original probability space. Due to the pathwise nature of integral, its value is not changed upon changes of measure.


\subsection{Subfractional Brownian motion}
Subfractional Brownian motion\index{subfractional Brownian motion} with index $H$, that is a centered Gaussian process
$G^H=\left\{G^H(t), t \geq 0 \right\}$ with  covariance function
$$\wtiE G^H(t) G^H(s) = t^{2H}+s^{2H} -\frac{1}{2}\left(|t+s|^{2H} + |t-s|^{2H} \right),$$
 satisfies condition $(A)$ and  condition $(B)$ for $H\in(\frac{1}{2},1)$.

\subsection{Bifractional Brownian motion}
Bifractional Brownian motion\index{bifractional Brownian motion} with indices $A \in (0,1)$ and $K \in (0,1)$, that is a centered Gaussian process with covariance function
 $$ \wtiE G^{A,K}(t) G^{A,K}(s) = \frac{1}{2^K} \left( \left(t^{2A}+s^{2A}\right)^K - |t-s|^{2AK}\right),$$ satisfies condition $(A)$ with $H = AK$ and satisfies condition $(B)$ for $AK>\frac{1}{2}$.

\subsection{Geometric Brownian motion}\index{geometric Brownian motion}
Geometric Brownian motion involving the Wiener component  and having the form $$S=\left\{S(t)=S(0)\exp\left\{\mu t+\sigma W(t)\right\}, \;\; t\ge 0 \right\},$$
with $S(0)>0$, $\mu\in\wtiR$, $\sigma>0$, is Wiener-transformable to the underlying Wiener process $W$. However, it does not satisfy the assumptions of Theorem~\ref{wti:thm1}. One should appeal here to the standard semimartingale tools, like the martingale representation theorem.

\subsection{Linear combination of fractional Brownian motions}
Consider a collection of Hurst indices $\frac{1}{2}\le H_1< H_2<\ldots<H_m<1$ and independent fractional Brownian motions with corresponding Hurst indices $H_i$, $1\le i \le m$. Then the linear combination $\sum_{i=1}^{m}a_iB^{H_i}$ is $m$-Wiener-transformable to the Wiener process $W=(W_1,\ldots,W_m)$, where $W_i$ is such Wiener process to which fractional Brownian motion $B^{H_i}$ is Wiener-transformable. In particular,  the  mixed fractional Brownian motion  $M^H=W+B^H$, introduced in \cite{Cheridito}, is $2$-Wiener-transformable.

The linear combination $\sum_{i=1}^{m}a_iB^{H_i}$ satisfies condition  $(A)$   with $H=H_1$, and condition $(B)$ whenever $H_1>1/2$.

We note that in the case of mixed fractional Brownian motion, the existence of representation \eqref{wti:reprez} cannot be derived from Theorem~\ref{wti:thm1}, as we have $H = \frac12$ in this case. By slightly different methods, it was established in \cite{shev-viita} that arbitrary $\mathcal{F}_T$-measurable random variable $\xi$ admits the representation
$$
\xi = \int_0^T \psi(s) d\big(B^H(s) + W(s)\big),
$$
where the integral with respect to $B^H$ is understood, as here, in the pathwise sense, the integral with respect to $W$, in the extended It\^o sense. In contrast to Theorem \ref{wti:thm:representation}, we can not for the moment establish this result for the bounded strategies. Therefore, it would be interesting to study which random variables have representations with bounded $\psi$ in the mixed model.

\subsection{Volterra process}\index{Volterra process} Consider Volterra integral transform of Wiener process, that is the process of the form $G(t) = \int_0^t K(t,s) dW(s)$ with non-random kernel $K(t, \cdot) \in L_2[0,t]$ for $t\in[0,T]$. Let the constant $r\in[0,1/2)$ be fixed. Let the following conditions hold.
\begin{itemize}
\item[ $(B1)$] The kernel $K$ is non-negative on $[0,T]^2$  and for any $s\in [0,T]$   $K(\cdot,s)$ is non-decreasing in the first argument;
\item[$(B2)$]  There exist constants   $D_i>0, i=2,3$ and   $H\in(1/2,1)$    such that   $$|K(t_2,s) - K(t_1,s)| \leq D_2 |t_2-t_1|^{H}s^{-r},\quad s, t_1,t_2 \in [0,T] $$
      and  $$\ K(t,s)\leq D_3(t-s)^{H-1/2}s^{-r};$$
\end{itemize}
      and at least one of the following conditions
\begin{itemize}
    \item[$(B3,a)$]  There exist constant  $D_1>0$  such that $$
    D_1|t_2-t_1|^{H}s^{-r}\leq|K(t_2,s) - K(t_1,s)|,\quad s, t_1,t_2 \in [0,T];
    $$
     \item[$(B3,b)$]There exist constant  $D_1>0$  such that $$
     K(t,s)\geq D_1(t-s)^{H-1/2}s^{-r},\quad s, t \in [0,T].$$
\end{itemize}
Then the Gaussian  process  $G(t) = \int_0^t K(t,s) dW(s)$, satisfies  condition $(A)$, $(B)$  on any subinterval $[T-\delta, T]$ with $\delta\in (0,1)$.

\section{Expected utility maximization in Wiener-transformable markets}\label{wti:sec:4}

\subsection{Expected utility maximization for unrestricted capital profiles}\index{utility maximization}
Consider the problem of maximizing the expected utility. Our goal is to characterize the optimal asset profiles in the framework of the markets with  risky assets involving  Gaussian processes satisfying conditions of Theorem \ref{wti:thm1}. We follow the general approach described in \cite{ekel} and \cite{karat}, but apply its  interpretation from \cite{Foll-Sch}.  We fix $T>0$ and from now on consider $\wtiF_T^W$-measurable random variables. Let the utility function $u:\wtiR\rightarrow\wtiR$ be strictly increasing and strictly concave, $L^0(\Omega, \wtiF_T^W, \wtiProb)$ be the set of all $\wtiF_T^W$-measurable random variables, and let the set of admissible capital profiles coincide with $L^0(\Omega, \wtiF_T^W, \wtiProb)$. Let $\wtiProb^*$ be a probability measure on $(\Omega, \wtiF_T^W)$, which is equivalent to $\wtiProb$, and denote $\varphi(T)=\frac{d\wtiProb^*}{d\wtiProb}$.  The budget constraint is given  by $\wtiE_{\wtiProb^*}(X)=w$, where $w>0$ is some number that can be in some cases, but not obligatory, interpreted as the initial wealth. Thus the budget set is defined as
$$\mathcal{B}=\left\{X\in L^0\left(\Omega, \wtiF_T^{W}, \wtiProb\right)\cap L^1\left(\Omega, \wtiF_T^W, \wtiProb^* \right)|\wtiE_{\wtiProb^*}(X)=w\right\}.$$
The problem is to find such $X^*\in\mathcal{B}$, for which $\wtiE( u(X^*))=\max_{X\in\mathcal{B}} \wtiE( u(X))$. Consider the inverse function $I(x)=(u'(x))^{-1}$.
 \begin{theorem}[{\cite[Theorem 3.34]{Foll-Sch}}]\label{wti:Theorem main for max}
Let the following condition hold:
\label{wti:Follmer-Sch}
   Strictly increasing and strictly concave utility function $u:\wtiR\rightarrow\wtiR$ is continuously differentiable, bounded from above and $$\lim_{x\downarrow -\infty} u'(x)=+\infty.$$
Then the solution of this maximization problem has a form $$X^*=I(c\varphi(T)),$$   under additional assumption  that $\wtiE_{\wtiProb^*}(X^*)=w$.
\end{theorem}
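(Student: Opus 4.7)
The plan is to prove optimality of $X^* = I(c\varphi(T))$ by the classical concavity/Lagrangian argument: for any feasible $X \in \mathcal{B}$, compare $u(X)$ with $u(X^*)$ using the tangent-line inequality, and show that the budget constraint forces the difference in expected utility to be non-positive.

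First I would verify that the candidate $X^*$ makes sense and is feasible. Since $u$ is strictly increasing, strictly concave, continuously differentiable, and $\lim_{x\downarrow -\infty} u'(x) = +\infty$, the derivative $u'$ is a strictly decreasing, continuous map from $\wtiR$ onto $(0, u'(-\infty)) = (0, +\infty)$ if one also assumes $u'(+\infty) = 0$ (which follows from concavity plus boundedness from above of $u$). Hence $I = (u')^{-1} : (0,\infty) \to \wtiR$ is well defined, strictly decreasing, and continuous. As $\varphi(T) > 0$ almost surely (equivalence of measures), $X^* = I(c\varphi(T))$ is a well-defined $\wtiF_T^W$-measurable random variable, and we assume $c>0$ is chosen so that $\wtiE_{\wtiProb^*}(X^*) = w$ (this is the selection hypothesis in the statement), so $X^* \in \mathcal{B}$.

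The core step is the concavity inequality. For any $X \in \mathcal{B}$, strict concavity and differentiability of $u$ give
\begin{equation*}
u(X) - u(X^*) \le u'(X^*)(X - X^*).
\end{equation*}
By construction $u'(X^*) = u'(I(c\varphi(T))) = c\varphi(T)$. Taking $\wtiProb$-expectation and using $\wtiE[\varphi(T)Y] = \wtiE_{\wtiProb^*}[Y]$ whenever the right-hand side makes sense,
\begin{equation*}
\wtiE[u(X)] - \wtiE[u(X^*)] \le c\,\wtiE[\varphi(T)(X - X^*)] = c\bigl(\wtiE_{\wtiProb^*}[X] - \wtiE_{\wtiProb^*}[X^*]\bigr) = c(w-w) = 0.
\end{equation*}
This proves $\wtiE[u(X^*)] \ge \wtiE[u(X)]$ for every $X \in \mathcal{B}$, which is the desired optimality. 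Uniqueness then follows from strict concavity: if $\wtiE[u(X)] = \wtiE[u(X^*)]$ and $X \ne X^*$ on a set of positive probability, one can get a strict inequality via the midpoint $\tfrac12(X + X^*) \in \mathcal{B}$.

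The main obstacle is the integrability/well-definedness of the expectations in the argument above: both $\wtiE[u(X)]$ and $\wtiE[\varphi(T)X]$ must be meaningful for the chain of (in)equalities to go through. Boundedness from above of $u$ makes $\wtiE[u(X)]$ well-defined in $[-\infty, \sup u]$, so the left-hand side is unambiguous; the delicate point is that $\wtiE[\varphi(T)(X - X^*)] = \wtiE_{\wtiProb^*}[X - X^*]$ requires integrability under $\wtiProb^*$, which is exactly the budget-set condition $X \in L^1(\Omega, \wtiF_T^W, \wtiProb^*)$ built into $\mathcal{B}$. The only genuine technicality is verifying that the tangent inequality, whose right-hand side is $c\varphi(T)(X - X^*)$ and whose left-hand side may be $-\infty$, can be legitimately integrated; since the right-hand side is $\wtiProb$-integrable (because $X, X^* \in L^1(\wtiProb^*)$), the integral of the left-hand side is well defined in $[-\infty, +\infty)$, and the inequality passes to expectations. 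This is exactly where the assumption $\lim_{x\downarrow -\infty} u'(x) = +\infty$ enters indirectly: it guarantees that the first-order condition $u'(X^*) = c\varphi(T)$ has a solution $X^* \in \wtiR$ for every realization of $\varphi(T) \in (0,\infty)$.
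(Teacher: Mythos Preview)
The paper does not supply its own proof of this theorem: it is quoted directly from \cite[Theorem 3.34]{Foll-Sch} and used as a black box for the subsequent representation results. Your argument is the standard Lagrangian/tangent-line proof that one finds in that reference, and it is correct; the integrability caveats you flag (well-definedness of $\wtiE[u(X)]$ via boundedness from above of $u$, and $\wtiProb$-integrability of $c\varphi(T)(X-X^*)$ via $X,X^*\in L^1(\wtiProb^*)$) are exactly the points that need care, and you have identified them accurately.
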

To connect the solution of maximization problem with specific $W$-transform\-able Gaussian process describing the price process, we consider the following items.

1. Consider  random variable $\varphi(T)$,  $\varphi(T)>0$ a.s. and let $\wtiE(\varphi(T))=1.$  Being the terminal value of a positive martingale $\varphi=\{\varphi_t=\wtiE(\varphi(T)|\wtiF_t^W), t\in[0,T]\}$, $\varphi(T)$ admits the following representation
\begin{equation}\label{wti:fi1}
\varphi(T)=\exp\left\{\int_0^T \vartheta(s)dW_s-\frac{1}{2}\int_0^T \vartheta^2(s)ds\right\},
\end{equation}
  where $\vartheta$ is a real-valued progressively measurable process for which $$ \wtiProb\left\{\int_{0}^{T}\vartheta^2(s)ds<\infty\right\}=1.$$
Assume that  $\vartheta$ satisfies \eqref{wti:thetaassump}. Then $\varphi(T)$ is a terminal value of a H\"{o}lder continuous process of order $\frac{1}{2} - \frac{1}{2p}$.

2. Consider $W$-transformable Gaussian process $G=\{G(t), t\in[0,T]\}$ satisfying conditions $(A)$ and $(B)$, and introduce the set
\begin{gather*}
\mathcal{B}_w^G=\bigg\{\psi\colon [0,T]\times \Omega\to \wtiR\ \Big|\ \text{$\psi$ is bounded $\wtiF_t^W$-adapted, there exists a generalized}\\ \text{Lebesgue-Stieltjes integral}
 \int_0^T \psi(s)dG(s), \;\; \text{and} \;\; \wtiE\bigg(\varphi(T)\int_0^T\psi(s) dG(s)\bigg)=w\bigg\}.
\end{gather*}
\begin{theorem} \label{wti:TheoremIntRepresentation} Let the following conditions hold
\begin{itemize}
    \item[$(i)$] Gaussian process $G$ satisfies condition $(A)$ and $(B)$.
 \item[$(ii)$] Function $I(x),x\in\wtiR$ is H\"older continuous.
 \item[$(iii)$] Stochastic process $\vartheta$ in representation \eqref{wti:fi1} satisfies \eqref{wti:thetaassump} with some $p>1$.
        \item[$(iv)$] There exists $c\in\wtiR$ such that $\wtiE(\varphi(T)I(c\varphi(T)))=w$.
\end{itemize}  Then the random variable $X^*=I(c\varphi(T))$  admits the representation
\begin{equation}\label{wti:reprmain}
X^*=\int_0^T\overline{\psi}(s)dG(s),
\end{equation}
with some $\overline{\psi}\in \mathcal{B}_w^G,$
and
\begin{equation}\label{wti:maxim}\wtiE( u(X^*))=\max_{\psi\in\mathcal{B}_w^G}\wtiE \left(u\left(\int_0^T\psi(s)dG(s)\right)\right).\end{equation}
\end{theorem}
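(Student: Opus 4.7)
The plan is to reduce the representation claim \eqref{wti:reprmain} to Theorem~\ref{wti:thm:representation} by writing $X^*=I(c\varphi(T))$ as the terminal value of a H\"older continuous adapted process, and then to deduce the optimality statement \eqref{wti:maxim} from Theorem~\ref{wti:Theorem main for max} by showing $\mathcal{B}_w^G\subseteq\mathcal{B}$.

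First, I would examine the positive martingale $\varphi(t)=\wtiE(\varphi(T)\mid\wtiF_t^W)$, which by \eqref{wti:fi1} has the exponential form
$$\varphi(t)=\exp\left\{\int_0^t\vartheta(s)\,dW(s)-\tfrac12\int_0^t\vartheta^2(s)\,ds\right\}.$$
Under $(iii)$, Lemma~\ref{wti:lem1} gives that the stochastic integral is a.s.\ H\"older continuous of any order up to $\tfrac12-\tfrac{1}{2p}$; H\"older's inequality $\int_s^t\vartheta^2(u)\,du\le(t-s)^{1-1/p}\big(\int_s^t|\vartheta(u)|^{2p}du\big)^{1/p}$ yields H\"older continuity of the drift of order $1-1/p$. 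Since the paths of $\varphi$ are continuous (hence a.s.\ bounded on $[0,T]$) and $\exp$ is locally Lipschitz, $\varphi$ itself is a.s.\ H\"older continuous on $[0,T]$. Composing with the H\"older function $I$ from $(ii)$, the process $Z(t):=I(c\varphi(t))$ is adapted and H\"older continuous with $Z(T)=X^*$. Applying Theorem~\ref{wti:thm:representation} produces a bounded adapted $\overline\psi$ with $\|\overline\psi\|_{\alpha,T}<\infty$ for some $\alpha\in(1-H,1/2)$ satisfying $X^*=\int_0^T\overline\psi(s)\,dG(s)$. Condition $(iv)$ reads $\wtiE(\varphi(T)X^*)=w$, so $\overline\psi\in\mathcal{B}_w^G$, establishing \eqref{wti:reprmain}.

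Next, for \eqref{wti:maxim}, I would verify the inclusion $\mathcal{B}_w^G\subseteq\mathcal{B}$. Any $\psi\in\mathcal{B}_w^G$ yields $X_\psi:=\int_0^T\psi(s)\,dG(s)$ which is $\wtiF_T^W$-measurable, since $\psi$ is $\wtiF^W$-adapted and $\mathcal{F}_t^G=\mathcal{F}_t^W$ by Wiener-transformability; the budget identity $\wtiE(\varphi(T)X_\psi)=w$ is exactly $\wtiE_{\wtiProb^*}(X_\psi)=w$ and makes sense because the bound $|X_\psi|\le\Lambda_\alpha(G)\|\psi\|_{\alpha,T}$ from \eqref{wti:equ:ineq}, combined with the Gaussian tail behaviour of $\Lambda_\alpha(G)$ and the boundedness of $\psi$, places $X_\psi$ in $L^1(\wtiProb^*)$. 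Hence $\mathcal{B}_w^G\subseteq\mathcal{B}$. Theorem~\ref{wti:Theorem main for max}, whose hypotheses are guaranteed by $(iv)$, identifies $X^*$ as the maximizer of $\wtiE u$ over $\mathcal{B}$, and since $X^*\in\mathcal{B}_w^G$ by the previous step, $X^*$ also maximizes $\wtiE u$ over the smaller set $\mathcal{B}_w^G$, giving \eqref{wti:maxim}.

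The main technical obstacle is the H\"older regularity of $\varphi$: one has to execute the stopping argument from the proof of Lemma~\ref{wti:lem1} so that on each $\Omega_n=\{\int_0^T|\vartheta|^{2p}\,ds\le n\}$ the paths of $\int_0^\cdot\vartheta\,dW$ are actually H\"older continuous with a deterministic exponent, yielding pathwise H\"older continuity of $\varphi$ a.s.\ on $\Omega=\bigcup_n\Omega_n$. Once this regularity is in hand, the composition step and the invocation of Theorems~\ref{wti:thm:representation} and \ref{wti:Theorem main for max} are routine.
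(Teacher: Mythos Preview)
Your proposal is correct and matches the paper's proof: both show that $t\mapsto I(c\varphi(t))$ is H\"older continuous (the paper does this via Theorem~\ref{wti:thm1}, you unpack it directly), apply the representation theorem, and then deduce optimality from Theorem~\ref{wti:Theorem main for max} via the inclusion $\mathcal{B}_w^G\subseteq\mathcal{B}$ (the paper phrases this last step by contradiction, but the content is identical). Your separate integrability argument for $X_\psi\in L^1(\wtiProb^*)$ is unnecessary---and the Gaussian-tail reasoning is shaky, since boundedness of $\psi$ does not control the random quantity $\|\psi\|_{\alpha,T}$---but this is harmless, because the budget condition $\wtiE\big(\varphi(T)X_\psi\big)=w$ built into the definition of $\mathcal{B}_w^G$ already presupposes $X_\psi\in L^1(\wtiProb^*)$.
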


\begin{proof}
From Lemma \ref{wti:lem1} we have that for any $c \in \mathbb{R}$ the random variable $\xi= I(c \varphi(T))$  is the final value of a H\"{o}lder continuous process
$$
U(t)= I(c \varphi(t))  = I\left(c \exp\left\{\int_0^t \vartheta(s) d W(s) - \frac{1}{2} \int_0^t \vartheta^2(s) ds\right\}\right).
$$
and the H\"{o}lder exponent exceeds $\rho$. Together with $(i)$--$(iii)$ this allows to apply Theorem \ref{wti:thm1} to obtain the existence of representation (\ref{wti:reprmain}). Assume now that (\ref{wti:maxim}) is not valid, and there exists
$\psi_0 \in \mathcal{B}_w^G$ such that $\wtiE\left(\varphi(T)\int_0^T \psi_0(s) d G(s)\right)=w$, and $\wtiE u \left( \int_0^T \psi_0(s) d G(s) \right)>\wtiE u(X^*)$. But in this case $\int_0^T \psi_0(s) d G(s)$ belongs to $\mathcal{B}$, and we get a contradiction with Theorem \ref{wti:Theorem main for max}.
\end{proof}
\begin{remark}
Assuming only $(i)$ and $(iv)$, one can show in a similar way, but using Theorem~\ref{wti:thm2} instead of Theorem~\ref{wti:thm1} that
$$
\wtiE( u(X^*))=\sup_{\psi\in\mathcal{B}_w^G}\wtiE \left(u\left(\int_0^T\psi(s)dG(s)\right)\right).
$$
However, the existence of a maximizer is not guaranteed in this case.
\end{remark}

\begin{example}
Let $u(x) = 1 - e^{- \beta x}$ be an exponential utility function with constant absolute risk aversion $\beta>0$. In this case
$I(x) = - \frac{1}{\beta} \log ( \frac{x}{\beta})$. Assume that
$$\varphi(T) = \exp \left\{ \int_0^T \vartheta(s) dW(s) - \frac{1}{2} \int_0^T\vartheta^2(s) ds \right\}$$ is chosen in such a way that
\begin{equation}\begin{gathered}
\label{wti:eq1ex41}
\wtiE \left( \varphi(T) |\log \varphi(T)|\right)\\ =\wtiE \bigg( \exp\left\{ \int_0^T \vartheta(s) dW(s)  - \frac{1}{2} \int_0^T\vartheta^2(s) ds \right\}\\ \times
\left|\int_0^T \vartheta(s) dW(s) - \frac{1}{2} \int_0^T\vartheta^2(s) ds\right|
\bigg)<\infty.
\end{gathered}\end{equation}
Then, according to Example 3.35 from \cite{Foll-Sch}, the optimal profile can be written as
\begin{equation}
\label{wti:ex41optprofile}
X^* = - \frac{1}{\beta} \left( \int_0^T \vartheta(s) dW(s) - \frac{1}{2} \int_0^T\vartheta^2(s) ds \right) + w + \frac{1}{\beta} H(\wtiProb^*|\wtiProb),
\end{equation}
where $H(\wtiProb^*|\wtiProb) = \wtiE \left(\varphi(T) \log \varphi(T)\right)$, condition (\ref{wti:eq1ex41}) supplies that $H(\wtiProb^*|\wtiProb)$ exists, and the maximal value of the expected utility is
$$
\wtiE (u(X^*)) = 1 - \exp\left\{-\beta w - H(\wtiProb^*|\wtiProb) \right\}.
$$
Let $\varphi(T)$ be chosen in such a way that the corresponding process $\vartheta$ satisfies the assumption of Lemma \ref{wti:lem1}. Also, let $W$-transformable process $G$ satisfy conditions $(A)$ and $(B)$ of Theorem \ref{wti:Follmer-Sch}, and $\vartheta$ satisfy \eqref{wti:thetaassump} with $p>1$.
 Then  we can conclude directly from representation (\ref{wti:ex41optprofile}) that conditions of Theorem \ref{wti:Follmer-Sch} hold. Therefore, the optimal profile $X^*$  admits the representation  $X^* =   \int_0^T \psi (s) d G(s).$
\end{example}
 \begin{remark} Similarly, under the same conditions as above, we can conclude  that  for any constant $d\in \mathbb{R}$ there exists $\psi_d$ such that $X^* = d + \int_0^T \psi_d(s) d G(s).$ Therefore, we can start from any initial value of the capital and achieve the desirable wealth. In this sense, $w$ is not necessarily the initial wealth as it is often assumed in the semimartingale framework, but  is rather a budget constraint in the generalized sense.
 \end{remark}
 \begin{remark}
 In the case when $W$-transformable Gaussian process $G$ is a semimartingale, we can use Girsanov's theorem in order to get the representation, similar to \eqref{wti:reprmain}. Indeed,
  let, for example, $G$ be a Gaussian process of the form $G(t)=\int_0^t\mu(s)ds +\int_0^t a(s)dW(s)$, $|\mu(s)|\leq \mu$, $a(s)>a>0$ are non-random measurable functions, and $\xi$ is $\wtiF_T^W$-measurable random variable, $\wtiE(\xi^2)<\infty$. Then   we transform $G$ into $\widetilde{G}=\int_0^\cdot a(s)d\widetilde{W}(s)$, with the help of equivalent probability measure $\widetilde{\wtiProb}$ having Radon--Nikodym derivative $$\frac{d\widetilde{\wtiProb}}{d\wtiProb}=\exp\left\{-\int_0^T\frac{\mu(s)}{a(s)}dW(s)-\frac{1}{2}
\int_0^T\left(\frac{\mu(s)}{a(s)}\right)^2d s \right\}.$$ With respect to this measure $\wtiE_{\widetilde{\wtiProb}}|X^*|<\infty$, and we get the following  representation
\begin{gather}\label{wti:repraux}
X^*=\wtiE_{\widetilde{\wtiProb}}(X^*)+\int_0^T\psi(s)d\widetilde{W}_s=\wtiE_{\widetilde{\wtiProb}}(X^*)+\int_0^T
\frac{\psi(s)}{a(s)}d\widetilde{G}(s)\\=\wtiE_{\widetilde{\wtiProb}}(X^*)+\int_0^T
\frac{\psi(s)}{a(s)}d{G}(s)
=\wtiE_{\widetilde{\wtiProb}}(X^*)+\int_0^T\psi(s) {\mu(s)}  ds+\int_0^T
 {\psi(s)} dW(s).
\end{gather}
Representations \eqref{wti:reprmain} and \eqref{wti:repraux} have the following distinction: \eqref{wti:reprmain} ``starts'' from $0$ (but can start from any other constant) while \eqref{wti:repraux} ``starts'' exactly from $\wtiE_{\widetilde{\wtiProb}}(X^*)$.
\end{remark}

As we can see, the solution of the utility maximization problem for $W$--trans\-formable process depends on the process in  indirect way, through the random variable $\varphi(T)$ such that $\wtiE\varphi(T)=1$, $\varphi(T)>0$ a.s. Also, this solution depends on whether or not we can choose the appropriate value of $c$, but this is more or less a technical issue. Let us return to the choice of $\varphi(T)$. In the case of the semimartingale market, $\varphi(T)$ can be reasonably chosen as the likelihood ratio of some martingale measure, and the choice is unique in the case of the complete market. The non-semimartingale market can contain some hidden semimartingale structure. To illustrate this, consider two examples.

\begin{example}\label{wti:ex4.2}
Let the market consist of bond $B$ and stock $S$, $$B(t)=e^{rt},\;S(t)=\exp\left\{\mu t +\sigma B_t^{H}\right\},$$ $r\geq0$, $\mu\in\wtiR$, $\sigma>0$, $H>\frac{1}{2}$. The discounted price process has a form $Y(t)=\exp\left\{(\mu-r)t+\sigma B_t^H\right\}$. It is well-known that such market admits an arbitrage, but even in these circumstances the utility maximization problem makes sense.
Well, how to choose $\varphi(T)$? There are at least two natural approaches.

1. Note  that for $H>\frac{1}{2}$ the kernel $K^H$ from (\ref{wti:fbmviawin}) has a form
$$
K^H(t,s)= C(H) s^{\frac{1}{2}- H}\int_s^t u^{H-\frac{1}{2}}(u-s)^{H-\frac{3}{2}} du,
$$
and representation (\ref{wti:winviafbm}) has a form
$$
W(t) = \left(C(H)\right)^{-1}\int_0^t s^{\frac{1}{2}-H} K^*(t,s) d B_s^H,
$$
where
\begin{equation*}\begin{gathered}
K^*(t,s) = \Big(t^{H-\frac{1}{2}}(t-s)^{\frac{1}{2} - H} \\ -\left(H-\frac{1}{2}\right)\int_s^t u^{H-\frac{3}{2}}(u-s)^{\frac{1}{2}-H}du\Big)\frac{1}{\Gamma\left(\frac{3}{2} - H\right)}.
\end{gathered}\end{equation*}

Therefore,
\begin{eqnarray*}
& &\left(C(H)\right)^{-1}\int_0^t s^{\frac{1}{2}-H} K^*(t,s) d \left( (\mu - r) s + \sigma B_s^H\right)\\
&=& \sigma W(t) + \frac{\mu-r}{C(H)} \int_0^t s^{\frac{1}{2}-H} K^*(t,s) ds\\
&=& \sigma W(t) + \frac{\mu-r}{C(H)\Gamma\left(\frac{3}{2} - H\right)} \int_0^t\left( s^{\frac{1}{2}-H} t^{H- \frac{1}{2}} (t-s)^{\frac{1}{2}-H}\right.\\
& &{}- \left.\left(H-\frac{1}{2}\right)s^{\frac{1}{2}-H} \int_s^t u^{H-\frac{3}{2}}(u-s)^{\frac{1}{2}-H}du \right)ds\\
&=&\sigma W_t + \frac{\mu -r}{C(H) \Gamma(\frac{3}{2}-H)}\frac{\Gamma^2(\frac{3}{2}-H)}{(\frac{3}{2}-H)\Gamma (2 - 2 H)} t^{\frac{3}{2}-H}\\
&=& \sigma W_t + (\mu-r) C_1(H) t^{\frac{3}{2}-H},
\end{eqnarray*}
where
$$
C_1(H) = \left(\frac{3}{2}-H\right)^{-1} \left(\frac{\Gamma(\frac{3}{2} - H)}{2H\Gamma(2 - 2H)\Gamma(H+\frac{1}{2})} \right)^\frac{1}{2}.
$$
In this sense we say that the model involves a hidden semimartingale structure.\\
Consider a virtual semimartingale asset
\begin{align*}
\hat{Y}(t) &= \exp \left\{ (C(H))^{-1} \int_0^t s^{\frac{1}{2}-H}K^*(t,s) d \log Y(s) \right\} \\
&= \exp\left\{\sigma W_t + (\mu-r) C(H)t^{\frac{3}{2}-H}\right\}.
\end{align*}
We see that measure $\wtiProb^*$ such that
\begin{equation}\begin{split}\label{wti:ex42ChangeMeasure}
\frac{d\wtiProb^*}{d\wtiProb} &=\exp \left\{ - \int_0^T \left(\frac{(\mu-r)C_2(H)}{\sigma} s^{\frac{1}{2}-H}+ \frac{\sigma}{2}\right)dW_s\right. \\
&  \quad \left.- \frac{1}{2} \int_0^T \left(\frac{(\mu-r)C_2(H)}{\sigma} s^{\frac{1}{2}-H}+ \frac{\sigma}{2}\right)^2 ds\right\},
\end{split}\end{equation}
where $C_2(H) = C_1(H) \left(\frac{3}{2}-H\right),$ reduces $\hat{Y}(t)$ to the martingale of the form\linebreak $\exp \left\{\sigma W_t - \frac{\sigma^2}{2} t\right\}$.
Therefore, we can put $\varphi(T) = \frac{d\wtiProb^*}{d\wtiProb}$ from (\ref{wti:ex42ChangeMeasure}). Regarding the H\"{o}lder property, $\vartheta(s)= s^{\frac{1}{2}-H}$ satisfies \eqref{wti:thetaassump} with some $p>1$ for any $H\in(\frac12,1)$. Therefore, for the utility function $u(x) = 1 - e^{-\alpha x}$ we have
$$
X^* = \frac{1}{\alpha} \left(\int_0^T \varsigma(s) dW_s - \frac{1}{2} \int_0^T \varsigma_s^2 ds \right) + W + \frac{1}{2} H(\wtiProb^*|\wtiProb),
$$
where $\varsigma(s) = \frac{(\mu-r)C_2(H)}{\sigma} s^{\frac{1}{2}-H}+ \frac{\sigma}{2}$, and $|H(\wtiProb^*|\wtiProb)|<\infty.$

2. It was proved in \cite{Dung} that the fractional Brownian motion $B^H$ is the limit in $L_p(\Omega,\mathcal{F},\wtiProb)$ for any $p>0$ of the process
$$B^{H,\epsilon}(t)=\int_0^tK(s+\epsilon,s)dW(s)+\int_0^t\psi_\epsilon (s)ds,$$
where $W$ is he underlying Wiener process, i.e. $B^{H}(t)=\int_0^tK(t,s)dW(s),$ where
\begin{gather*}
K(t,s)=C_H s^{\frac{1}{2}-H}\int_s^t u^{H-\frac{1}{2}}(u-s)^{H-\frac{3}{2}}du,\\
\psi_\epsilon(s)=\int_0^s\partial_1K(s+\epsilon,u)dW_u,\\
\partial_1K(t,s)=\frac{\partial K(t,s)}{\partial t}=C_Hs^{\frac{1}{2}-H}t^{H-\frac{1}{2}}(t-s)^{H-\frac{3}{2}}.
\end{gather*}
Consider prelimit market with discounted risky asset price $Y^{\epsilon}$ of the form
$$Y^{\epsilon}(t)=\exp{\left\{(\mu-r)t+\sigma\int_{0}^{t}\psi_\epsilon(s)ds+\sigma\int_0^tK(s+\epsilon,s)dW_s\right\}}.$$
This financial market is arbitrage-free and complete, and the unique martingale measure has the Radon-Nikodym derivative
$$\varphi_\epsilon(T)=\exp\left\{-\int_0^T\zeta_\epsilon(t)dW_t-\frac{1}{2}\int_0^T\zeta^2_\epsilon(t)dt\right\},$$
where
$$\zeta_\epsilon(t)=\frac{\mu-r+\sigma\psi_\epsilon(t)}{\sigma K(t+\epsilon,t)}+\frac{1}{2}\sigma K(t+\epsilon,t).$$
Note that $K(t+\epsilon,t)\rightarrow 0$ as $\epsilon\rightarrow0$. Furthermore, $\rho_t=\frac{\mu-r+\sigma\psi_\epsilon(t)}{\sigma K(t+\epsilon,t)}$ is a Gaussian process with $\wtiE\rho_t=0$ and
\begin{gather*}
\wtivar \zeta_\varepsilon(t)=\int_{0}^{t}\left(\frac{\partial_1 K(t+\epsilon,u)}{ K(t+\epsilon,t)}\right)^2du\\
=\int_{0}^{t}\left(\frac{u^{1/2-H}(t+\epsilon)^{H-1/2}(t+\epsilon-u)^{H-3/2}}{t^{1/2-H}\int_t^{t+\epsilon}v^{H-1/2}(v-t)^{H-3/2}}\right)^2du\\
\ge\epsilon^{1-2H}\int_0^t(t+\epsilon-u)^{2H-3}du=\frac{\epsilon^{1-2H}t}{2-2H}\left(\epsilon^{2H-2}-(t+\epsilon)^{2H-2}\right)\rightarrow\infty.
\end{gather*}
Therefore, we can not get a reasonable limit of $\varphi_\epsilon(T)$ as $\epsilon\rightarrow0.$ Thus one should use this approach  with great caution.
\end{example}

\subsection{Expected utility maximization for restricted capital profiles}

Consider now the case when the utility function $u$ is defined on some interval $(a,\infty)$. Assume for technical simplicity that $a=0$. Therefore, in this case the set $\mathcal{B}_0$ of admissible capital profiles has a form
$$\mathcal{B}_0=\left\{X\in L^0(\Omega,\wtiF,\wtiProb):X\ge0 \;\; \text{a.s. and} \;\; \wtiE(\varphi(T) X)=w\right\}.$$
Assume that the utility function $u$ is continuously differentiable on $(0,\infty)$, introduce $\pi_1=\lim\limits_{x\uparrow\infty}u'(x)\ge0$, $\pi_2=u'(0+)=\lim\limits_{x\downarrow0}u'(x)\le +\infty$, and define $I^+:(\pi_1,\pi_2)\longrightarrow(0,\infty)$
as the continuous, bijective function, inverse to $u'$ on $(\pi_1,\pi_2)$.

Extend $I^+$ to the whole half-axis $\left[0,\infty\right]$ by setting
\begin{displaymath}
I^+(y)=\left\{ \begin{array}{ll}
+\infty,& y\le\pi_1\\
0,& y\ge\pi_2.
\end{array}\right.
\end{displaymath}

\begin{theorem}[\cite{Foll-Sch}, Theorem 3.39]
Let the random variable $X^*\in\mathcal{B}_0$ have a form $X^*=I^+(c\varphi(T))$ for such constant $c>0$ that $\wtiE( \varphi(T) I^{+}(c\varphi(T)))=w$. If $\wtiE u(X^*)<\infty$ then
$$\wtiE( u(X^*))=\max\limits_{X\in\mathcal{B}_0}\wtiE (u(X)),$$
and this maximizer is unique.
\end{theorem}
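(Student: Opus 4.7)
The plan is to apply a standard convex-duality / Lagrange-multiplier argument, adapted to the one-sided constraint $X\ge 0$. The key device is the pointwise supporting-line inequality
\begin{equation*}
u(x) - u(X^*) \le c\varphi(T)(x - X^*),\quad x\ge 0,\text{ a.s.,}
\end{equation*}
which, once established, can be integrated against $\wtiProb$: the common budget constraint $\wtiE(\varphi(T)X)=\wtiE(\varphi(T)X^*)=w$ makes the expectation of the right-hand side vanish, yielding $\wtiE u(X)\le \wtiE u(X^*)$ for every $X\in\mathcal{B}_0$.

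First I would establish the pointwise inequality by a case analysis driven by where $c\varphi(T)$ lies relative to $(\pi_1,\pi_2)$. On the event $\{c\varphi(T)\in(\pi_1,\pi_2)\}$ one has $X^*=I^+(c\varphi(T))\in(0,\infty)$ with $u'(X^*)=c\varphi(T)$, so the inequality is precisely the classical supporting-line bound coming from concavity of $u$ on $(0,\infty)$. On $\{c\varphi(T)\ge\pi_2\}$ one has $X^*=0$, and for every $x\ge 0$ concavity combined with $\pi_2\le c\varphi(T)$ gives $u(x)-u(0+)\le u'(0+)\,x=\pi_2\, x\le c\varphi(T)\,x$, as required. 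The event $\{c\varphi(T)\le\pi_1\}$ would force $X^*=+\infty$, but the budget constraint $\wtiE(\varphi(T)X^*)=w<\infty$ forbids this event from having positive probability.

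Next I would integrate the pointwise bound. The cross term $c\,\wtiE(\varphi(T)(X-X^*))=c(w-w)=0$ vanishes by the constraint, $\wtiE u(X^*)$ is finite by hypothesis, and $\wtiE u(X)\in[-\infty,\infty)$ is well-defined by concavity of $u$; this delivers $\wtiE u(X)\le \wtiE u(X^*)$. For uniqueness, I would invoke \emph{strict} concavity of $u$: if $X\ne X^*$ on a set of positive $\wtiProb$-measure, then on the sub-event where $X^*\in(0,\infty)$ the supporting-line inequality is strict, and at the boundary $\{X^*=0,\,X>0\}$ one uses that $u'(0+)<c\varphi(T)$ strictly whenever $c\varphi(T)>\pi_2$; in both situations equality in expectation is excluded, forcing $X=X^*$ a.s.

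The main obstacle I anticipate is the careful bookkeeping of the boundary cases $X^*\in\{0,+\infty\}$: the extension of $I^+$ by $0$ and $+\infty$ outside $(\pi_1,\pi_2)$ must be reconciled with the supporting-line inequality, and an auxiliary integrability check is needed both to rule out $\{X^*=+\infty\}$ having positive probability and to ensure $u(0+)>-\infty$ on any event where $\{X^*=0\}$ is carried. Once these technicalities are dispatched, both the optimality bound and the strict-concavity uniqueness conclusion reduce to routine computation.
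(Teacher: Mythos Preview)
The paper does not prove this theorem: it is quoted from F\"ollmer--Schied (Theorem~3.39) as an external input, with no argument supplied. Your proposal is correct and is precisely the standard Lagrangian/supporting-line proof one finds in that reference.

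One small refinement in the uniqueness step: you invoke strictness at $\{X^*=0,\ X>0\}$ only when $c\varphi(T)>\pi_2$, but the boundary case $c\varphi(T)=\pi_2$ must also be covered. Strict concavity still gives $u(x)-u(0+)<\pi_2\,x$ for every $x>0$: pick any $z\in(0,x)$ and write $u(x)-u(0+)=[u(x)-u(z)]+[u(z)-u(0+)]\le u'(z)(x-z)+\pi_2 z<\pi_2 x$, using that $u'$ is strictly decreasing so $u'(z)<\pi_2$. With this in hand your uniqueness argument closes without change.
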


From here we deduce the corresponding result on the solution of utility maximization problem similarly to Theorem~\ref{wti:TheoremIntRepresentation}. Define, as before,
\begin{gather*}
\mathcal{B}_w^G=\bigg\{\psi\colon [0,T]\times \Omega\to \wtiR\ \Big|\ \text{$\psi$ is bounded $\wtiF_t^W$-adapted, there exists a generalized}\\ \text{Lebesgue-Stieltjes integral}
 \int_0^T \psi(s)dG(s)\ge 0, \;\; \text{and} \;\; \wtiE\bigg(\varphi(T)\int_0^T\psi(s) dG(s)\bigg)=w\bigg\}.
\end{gather*}
\begin{theorem}  Let the following conditions hold
\begin{itemize}
    \item[$(i)$] Gaussian process $G$ satisfies conditions $(A)$ and $(B)$.
 \item[$(ii)$] Function $I^+(x),x\in\wtiR$ is H\"older continuous.
 \item[$(iii)$] Stochastic process $\vartheta$ in representation \eqref{wti:fi1} satisfies \eqref{wti:thetaassump} with some $p> 1$.
        \item[$(iv)$] There exists $c\in\wtiR$ such that $\wtiE(\varphi(T)I^+(c\varphi(T)))=w$.
\end{itemize}  Then the random variable $X^*=I^+(c\varphi(T))$  admits the representation
\begin{equation*} 
X^*=\int_0^T\overline{\psi}(s)dG(s),
\end{equation*}
with some $\overline{\psi}\in \widetilde{\mathcal{B}}_w^G$. If $\wtiE u(X^*)<\infty$, the $X^*$ is the solution to expected utility maximization problem:
and
\begin{equation*}\wtiE( u(X^*))=\max_{\psi\in\widetilde{\mathcal{B}}_w^G}\wtiE \left(u\left(\int_0^T\psi(s)dG(s)\right)\right).\end{equation*}
\end{theorem}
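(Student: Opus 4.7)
My plan is to mirror closely the proof of Theorem~\ref{wti:TheoremIntRepresentation}, adapting the key steps to the restricted setting where admissible profiles must be non-negative. The strategy divides naturally into two parts: first establishing the existence of a bounded integrand realizing $X^* = I^+(c\varphi(T))$ as a pathwise integral, and then invoking the Föllmer–Schied maximizer theorem (Theorem 3.39) to conclude optimality on the restricted class $\widetilde{\mathcal{B}}_w^G$.

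For the representation step, I would first observe that
$$
\varphi(t) = \exp\left\{\int_0^t \vartheta(s)\, dW(s) - \frac{1}{2}\int_0^t \vartheta^2(s)\, ds\right\}
$$
is H\"older continuous of any order strictly less than $\tfrac{1}{2} - \tfrac{1}{2p}$ by Lemma~\ref{wti:lem1} (applied to the It\^o integral part and combined with the obvious H\"older continuity of the drift term), and that the exponential function and $I^+$ preserve local H\"older continuity, the latter by hypothesis $(ii)$. Consequently, $U(t) := I^+(c\varphi(t))$ is an adapted H\"older continuous process with $U(T) = X^*$, so Theorem~\ref{wti:thm:representation} (equivalently, Theorem~\ref{wti:thm1}) yields a bounded adapted process $\overline{\psi}$ with $\|\overline{\psi}\|_{\alpha,T} < \infty$ for some $\alpha \in (1-H, 1/2)$ and $X^* = \int_0^T \overline{\psi}(s)\, dG(s)$ almost surely. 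Membership $\overline{\psi} \in \widetilde{\mathcal{B}}_w^G$ then follows immediately: the integral equals $X^* = I^+(c\varphi(T)) \ge 0$ by definition of $I^+$, and the budget constraint $\wtiE(\varphi(T) X^*) = w$ is exactly hypothesis $(iv)$.

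For the optimality step, suppose for contradiction that some $\psi_0 \in \widetilde{\mathcal{B}}_w^G$ satisfies $\wtiE\, u(\int_0^T \psi_0\, dG) > \wtiE\, u(X^*)$. By definition of $\widetilde{\mathcal{B}}_w^G$, the random variable $Y_0 := \int_0^T \psi_0(s)\, dG(s)$ is $\mathcal{F}_T^W$-measurable, non-negative almost surely, and satisfies $\wtiE(\varphi(T) Y_0) = w$; hence $Y_0 \in \mathcal{B}_0$. This contradicts the Föllmer–Schied maximizer property of $X^* = I^+(c\varphi(T))$ over $\mathcal{B}_0$, which is guaranteed by the assumption $\wtiE\, u(X^*) < \infty$ together with $(iv)$. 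Consequently $X^*$ achieves the supremum in \eqref{wti:maxim}, and since the supremum is realized by the representing integrand $\overline{\psi}$ constructed above, it is actually a maximum.

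The main technical subtlety, and the only real obstacle beyond routine adaptation, is verifying that the pathwise integral $\int_0^T \overline{\psi}(s)\, dG(s)$ is indeed equal to $X^*$ and not merely equal up to an almost sure identification that could conflict with the non-negativity constraint. Since Theorem~\ref{wti:thm:representation} constructs $\overline{\psi}$ precisely so that the integral equals $X^*$ almost surely, and since $X^* \ge 0$ is intrinsic to the image of $I^+$, this concern is vacuous and the non-negativity clause in the definition of $\widetilde{\mathcal{B}}_w^G$ is automatically fulfilled. One should also remark, as in the unrestricted case, that the integrand produced by the representation theorem need not be of any financially prescribed admissibility class beyond boundedness; this is consistent with the broader theme of the paper.
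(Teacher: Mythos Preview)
Your proposal is correct and follows essentially the same approach as the paper, which does not give a separate proof for this theorem but simply indicates that it is deduced ``similarly to Theorem~\ref{wti:TheoremIntRepresentation}''. Your two-step argument---first applying Lemma~\ref{wti:lem1} and the H\"older continuity of $I^+$ to exhibit $X^*$ as the terminal value of an adapted H\"older process and then invoking Theorem~\ref{wti:thm1}, followed by the contradiction argument against the F\"ollmer--Schied maximizer over $\mathcal{B}_0$---is exactly the intended adaptation, with the only new observation being that $I^+\ge 0$ automatically secures the non-negativity clause in $\widetilde{\mathcal{B}}_w^G$.
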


\begin{example}
Consider the case of CARA utility function $u$. Let first $u(x)=\frac{x^\gamma}{\gamma}$, $x>0$, $\gamma\in(0,1)$. Then, according to \cite[Example 3.43]{Foll-Sch},
$$I^+(c\varphi(T))=c^{-\frac{1}{1-\gamma}}(\varphi(T))^{-\frac{1}{1-\gamma}}.$$
If $d:=\wtiE (\varphi(T))^{-\frac{\gamma}{1-\gamma}}<\infty$ then unique optimal profile is given by $X^*=\frac{w}{d}(\varphi(T))^{-\frac{1}{1-\gamma}}$, and the maximal value of the expected utility is equal to
$$\wtiE( u(X^*))=\frac{1}{\gamma}w^{\gamma}d^{1-\gamma}.$$
As it was mentioned,
\begin{equation}\label{wti:fi}
\varphi=\varphi(T)=\exp\left\{\int\limits_{0}^T\vartheta(s)dW(s)-\frac{1}{2}\int\limits_0^T \vartheta^2(s)ds,\right\}
\end{equation}
 thus $$(\varphi(T))^{-\frac{1}{1-\gamma}}=\exp\left\{-\frac{1}{1-\gamma}\int\limits_0^T \vartheta(s)dW(s)+\frac{1}{2(1-\gamma)}\int\limits_0^T \vartheta^2(s)ds\right\}.$$
Therefore, we get the following result.
\begin{theorem}
Let the process $\vartheta$ in the representation \eqref{wti:fi} satisfy \eqref{wti:thetaassump}, and
$$\wtiE\exp\left\{-\frac{\gamma}{1-\gamma}\int\limits_0^T\vartheta(s)dW_s+\frac{\gamma}{2(1-\gamma)}\int\limits_0^T\vartheta^2_sds\right\}<\infty.$$
Let the process $G$ satisfy the same conditions as in Theorem \ref{wti:TheoremIntRepresentation}. Then $X^*=\int\limits_0^T\psi(s)dG(s)$.
\end{theorem}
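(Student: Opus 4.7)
The strategy is to reduce the claim to Theorem~\ref{wti:thm:representation} by exhibiting an adapted H\"older continuous process $Z$ with $Z(T)=X^*$. Recall from the preceding discussion that for the CRRA utility $u(x)=x^\gamma/\gamma$, $\gamma\in(0,1)$, the optimal profile equals $X^* = (w/d)(\varphi(T))^{-1/(1-\gamma)}$ with $d=\wtiE \varphi(T)^{-\gamma/(1-\gamma)}$. Expanding $\varphi(T)$ via \eqref{wti:fi}, the exponential moment assumption of the theorem is exactly the finiteness of $d$, so $X^*$ is a.s.\ well-defined, finite and strictly positive; in particular, condition $(iv)$ of Theorem~\ref{wti:TheoremIntRepresentation} holds with $c=(d/w)^{1-\gamma}$.

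Define
\[
Z(t) = \frac{w}{d}\exp\left\{-\frac{1}{1-\gamma}\int_0^t\vartheta(s)dW(s) + \frac{1}{2(1-\gamma)}\int_0^t\vartheta^2(s)ds\right\},
\]
which is $\mathbb{F}$-adapted and satisfies $Z(T)=X^*$. To verify pathwise H\"older continuity of $Z$, I first invoke Lemma~\ref{wti:lem1}: under \eqref{wti:thetaassump}, the stochastic integral $t\mapsto\int_0^t\vartheta(s)dW(s)$ is a.s.\ H\"older continuous of any order strictly less than $\tfrac12-\tfrac{1}{2p}$. For the drift term, H\"older's inequality with exponents $p$ and $p/(p-1)$ gives, for $0\le s<t\le T$,
\[
\int_s^t\vartheta^2(u)du \le (t-s)^{1-1/p}\Big(\int_0^T|\vartheta(u)|^{2p}du\Big)^{1/p},
\]
so $t\mapsto\int_0^t\vartheta^2(s)ds$ is a.s.\ H\"older continuous of order $1-1/p$. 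Since the exponent in the definition of $Z$ is therefore a.s.\ H\"older continuous of some order $r>0$ and is pathwise bounded on $[0,T]$, local Lipschitz continuity of the exponential transfers this regularity to $Z$ itself.

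With this adapted H\"older continuous process $Z$ satisfying $Z(T)=X^*$ in hand, Theorem~\ref{wti:thm:representation} applies directly: since $G$ satisfies $(A)$ and $(B)$ by hypothesis, there is a bounded adapted process $\psi$ with $\wtinorm{\psi}_{\alpha,T}<\infty$ for some $\alpha\in(1-H,1/2)$ such that $X^*=\int_0^T\psi(s)dG(s)$ almost surely. Because $X^*\ge 0$ and $\wtiE(\varphi(T)X^*)=w$ by the choice of $c$, the integrand $\psi$ automatically lies in $\widetilde{\mathcal{B}}_w^G$. The one delicate point is that condition $(ii)$ of Theorem~\ref{wti:TheoremIntRepresentation} (H\"older continuity of $I^+$ on all of $\mathbb{R}$) \emph{fails} in the CRRA case, since $I^+(x)=x^{-1/(1-\gamma)}$ is singular at zero; the detour through the explicit exponential representation of $Z$ is precisely what is needed to bypass this obstruction and land directly in the setting of Theorem~\ref{wti:thm:representation}.
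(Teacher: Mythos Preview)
Your argument is correct and is precisely the route the paper has in mind: the paper states the theorem immediately after displaying the explicit exponential form of $(\varphi(T))^{-1/(1-\gamma)}$ and leaves the proof implicit, since the running process $Z(t)=(w/d)(\varphi(t))^{-1/(1-\gamma)}$ is H\"older by Lemma~\ref{wti:lem1} (for the stochastic integral) together with the H\"older estimate on the drift, and Theorem~\ref{wti:thm:representation} then yields the representation. Your remark that condition~$(ii)$ of the restricted-profile theorem fails for the CRRA inverse $I^+(x)=x^{-1/(1-\gamma)}$, so that one must bypass it via the explicit exponential process rather than invoke that theorem black-box, is exactly the point.
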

In the case where $u(x)=\log x$, we have $\gamma=0$ and $X^*=\frac{w}{\varphi(T)}$. Assuming that the relative entropy $H\left({\wtiProb}|{\wtiProb^*}\right)=\wtiE(\frac{1}{\varphi(T)}\log \varphi(T))$ is finite, we get that
$$\wtiE(\log X^*)=\log w + H\left({\wtiProb}|{\wtiProb^*}\right).$$
\end{example}

\section*{Conclusion}

We have studied a broad class of non-semimartingale financial market models, where the random drivers are Wiener-transformable Gaussian random processes, i.e. some adapted transformations of a Wiener process. Under assumptions that the incremental variance of the process satisfies two-sided power bounds, we have given sufficient conditions for random variables to admit integral representations with bounded adapted integrand; these representations are models for bounded replicating strategies. It turned out that these representation results can be applied to solve utility maximization problems in non-semimartingale market models.

\bigskip
\begin{petit}
\noindent
\textbf{Acknowledgements }
Elena Boguslavskaya is supported by Daphne Jackson fellowship funded by EPSRC.
The research of Yu.~Mishura was funded (partially) by the Australian Government through the Australian Research Council (project number DP150102758).
Yu.~Mishura acknowledges that the present research is carried through within the frame and support of the ToppForsk project nr. 274410 of the Research Council of Norway with title STORM: Stochastics for Time-Space Risk Models.
\end{petit}

\bibliographystyle{spmpsci}
\bibliography{wtbibl}

\begin{thebibliography}{10}
\providecommand{\url}[1]{{#1}}
\providecommand{\urlprefix}{URL }
\expandafter\ifx\csname urlstyle\endcsname\relax
  \providecommand{\doi}[1]{DOI~\discretionary{}{}{}#1}\else
  \providecommand{\doi}{DOI~\discretionary{}{}{}\begingroup
  \urlstyle{rm}\Url}\fi

\bibitem{andrmish}
Androshchuk, T., Mishura, Y.: Mixed {B}rownian--fractional {B}rownian model:
  absence of arbitrage and related topics.
\newblock Stochastics \textbf{78}(5), 281--300 (2006)

\bibitem{bender-sottinen-valkeila}
Bender, C., Sottinen, T., Valkeila, E.: Pricing by hedging and no-arbitrage
  beyond semimartingales.
\newblock Finance Stoch. \textbf{12}(4), 441--468 (2008)

\bibitem{bender-sottinen-valkeila1}
Bender, C., Sottinen, T., Valkeila, E.: Fractional processes as models in
  stochastic finance.
\newblock In: Advanced mathematical methods for finance, pp. 75--103. Springer,
  Heidelberg (2011)

\bibitem{bjork}
Bj\"ork, T.: Arbitrage theory in continuous time, 2nd ed. edn.
\newblock Oxford University Press, Oxford (2004)

\bibitem{Cheridito}
Cheridito, P.: Mixed fractional {B}rownian motion.
\newblock Bernoulli \textbf{7}(6), 913--934 (2001)

\bibitem{Cheridito1}
Cheridito, P.: Arbitrage in fractional {B}rownian motion models.
\newblock Finance Stoch. \textbf{7}(4), 533--553 (2003)

\bibitem{dudley}
Dudley, R.M.: Wiener functionals as {I}t\^o integrals.
\newblock Ann. Probability \textbf{5}(1), 140--141 (1977)

\bibitem{Dung}
Dung, N.T.: Semimartingale approximation of fractional {B}rownian motion and
  its applications.
\newblock Comput. Math. Appl. \textbf{61}(7), 1844--1854 (2011)

\bibitem{ekel}
Ekeland, I., Temam, R.: Convex analysis and variational problems.
\newblock North-Holland Publishing Co., Amsterdam-Oxford; American Elsevier
  Publishing Co., Inc., New York (1976).
\newblock Translated from the French, Studies in Mathematics and its
  Applications, Vol. 1

\bibitem{Foll-Sch}
F\"ollmer, H., Schied, A.: Stochastic finance, extended edn.
\newblock Walter de Gruyter \& Co., Berlin (2011).
\newblock An introduction in discrete time

\bibitem{KS}
Karatzas, I., Shreve, S.E.: Brownian motion and stochastic calculus,
  \emph{Graduate Texts in Mathematics}, vol. 113, second edn.
\newblock Springer-Verlag, New York (1991)

\bibitem{karat}
Karatzas, I., Shreve, S.E.: Methods of mathematical finance, \emph{Applications
  of Mathematics (New York)}, vol.~39.
\newblock Springer-Verlag, New York (1998)

\bibitem{lishao}
Li, W.V., Shao, Q.M.: Gaussian processes: inequalities, small ball
  probabilities and applications.
\newblock Handbook of Statistics \textbf{19}, 533--597 (2001)

\bibitem{lifshits}
Lifshits, M.A.: Gaussian random functions, \emph{Mathematics and its
  Applications}, vol. 322.
\newblock Kluwer Academic Publishers, Dordrecht (1995)

\bibitem{mish-shev}
Mishura, Y., Shevchenko, G.: Small ball properties and representation results.
\newblock Stochastic Process. Appl. \textbf{127}(1), 20--36 (2017)

\bibitem{msv}
Mishura, Y., Shevchenko, G., Valkeila, E.: Random variables as pathwise
  integrals with respect to fractional {B}rownian motion.
\newblock Stochastic Process. Appl. \textbf{123}(6), 2353--2369 (2013)

\bibitem{Mish}
Mishura, Y.S.: Stochastic calculus for fractional {B}rownian motion and related
  processes, \emph{Lecture Notes in Mathematics}, vol. 1929.
\newblock Springer-Verlag, Berlin (2008)

\bibitem{norros}
Norros, I., Valkeila, E., Virtamo, J.: An elementary approach to a {G}irsanov
  formula and other analytical results on fractional {B}rownian motions.
\newblock Bernoulli \textbf{5}(4), 571--587 (1999)

\bibitem{rogers}
Rogers, L.C.G.: Arbitrage with fractional {B}rownian motion.
\newblock Math. Finance \textbf{7}(1), 95--105 (1997)

\bibitem{samko}
Samko, S.G., Ross, B.: Integration and differentiation to a variable fractional
  order.
\newblock Integral Transform. Spec. Funct. \textbf{1}(4), 277--300 (1993)

\bibitem{shalaiko}
Shalaiko, T., Shevchenko, G.: Integral representation with respect to
  fractional brownian motion under a log-h{\"o}lder assumption.
\newblock Modern Stochastics: Theory and Applications \textbf{2}(3), 219--232
  (2015)

\bibitem{shev-viita}
Shevchenko, G., Viitasaari, L.: Adapted integral representations of random
  variables.
\newblock International Journal of Modern Physics: Conference Series
  \textbf{36}, 16 (2015).
\newblock Article ID 1560004

\bibitem{zahle}
Z\"ahle, M.: On the link between fractional and stochastic calculus.
\newblock In: Stochastic dynamics ({B}remen, 1997), pp. 305--325. Springer, New
  York (1999)

\end{thebibliography}
\end{document}